\journalname{Graphs and Combinatorics}
\begin{document}

\newcommand{\bG}{{\bf G}}
\newcommand{\bB}{{\bf B}}
\newcommand{\bR}{{\bf R}}

\newcommand{\AP}{{\rm AP}}
\newcommand{\sr}{{\rm sr}}

\newtheorem{tha}{Theorem}
\renewcommand{\thetha}{\Alph{tha}}
\newtheorem{construction}[theorem]{Construction}
\newtheorem{observation}{Observation}
\newtheorem{fact}[theorem]{Fact}
\setlength{\textwidth}{17cm} \setlength{\oddsidemargin}{-0.1 in}
\setlength{\evensidemargin}{-0.1 in} \setlength{\topmargin}{0.0
in}

\def \df {\noindent {\bf Definition. }}

\newcommand{\dist}{{\rm dist}}


\def\qed{\hskip 1.3em\hfill\rule{6pt}{6pt} \vskip 20pt}

\linespread{1.0}
\input epsf
\def\epsfsize#1#2{1.0#1\relax}
\def\O{\text{O}}
\def\o{\text{o}}
\def\ex{\text{ex}}
\def\Z{\mathbb Z}
\def \lb{\lceil 8(n-1)/17 \rceil+1}
\def \bg{\lceil n/17 \rceil}

\title{Sub-Ramsey numbers for arithmetic progressions}
\author{Maria Axenovich and  Ryan Martin}

\institute{Department of Mathematics, Iowa State University, Ames,
IA, 50011. \\ {\it axenovic@math.iastate.edu,
rymartin@iastate.edu}}
\date{\today}
\maketitle

\begin{abstract}
Let the integers $1,\ldots,n$ be assigned colors. Szemer\'edi's
theorem implies that if there is a dense color class then there is
an  arithmetic progression of length three in that color. We study
the conditions on the color classes forcing totally multicolored
arithmetic progressions of length 3.

Let $f(n)$ be the smallest integer $k$ such that there is a coloring of
$\{1, \ldots, n\}$  without totally multicolored arithmetic progressions of length three and
such that each color appears on at most $k$ integers.
We  provide an exact value for $f(n)$ when $n$ is sufficiently large,
and all extremal colorings.
In particular, we show that $f(n)= 8n/17 + O(1)$.
This completely  answers a question of Alon, Caro and Tuza.
\end{abstract}

\begin{keyword}
Sub-Ramsey, Arithmetic Progressions, Bounded Colorings
\end{keyword}

\finalreceive{December 21, 2005}

\section{Introduction}
In this paper we investigate colorings of sets of natural numbers.
We say that a subset is {\it monochromatic} if all of its elements
have the same color and we say that it is {\it rainbow} if all of
its elements have distinct colors. A famous result of van der
Waerden \cite{W} can be reformulated in the following way.

\begin{theorem}
For each pair of positive integers $k$ and $r$ there exists a
positive integer $M$ such that  any coloring of integers
$1,\ldots,M$ with  $r$ colors yields a monochromatic arithmetic
progression of length $k$.
\end{theorem}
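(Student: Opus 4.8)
The plan is to prove this by the classical double-induction argument based on ``color focusing.'' Write $W(k,r)$ for the least $M$ that works, interpreted as $+\infty$ if no such $M$ exists; the task is to show $W(k,r)<\infty$ for all $k$ and $r$. I would induct on $k$. The case $k=1$ is trivial ($W(1,r)=1$, a single integer being a one-term progression) and $k=2$ is the pigeonhole principle ($W(2,r)=r+1$). So fix $k\ge 3$ and assume as induction hypothesis that $W(\ell,s)<\infty$ for $\ell=k-1$ and every number of colors $s$.

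For the inductive step I would introduce an auxiliary notion: arithmetic progressions $A_1,\dots,A_m$ of common length $\ell=k-1$, say $A_i=\{a_i,a_i+d_i,\dots,a_i+(\ell-1)d_i\}$, are \emph{color-focused at $b$} with respect to a coloring $\chi$ if $a_i+\ell d_i=b$ for every $i$, each $A_i$ is monochromatic, and the $A_i$ receive pairwise distinct colors. The engine of the proof is the statement $P(m)$: there is an integer $N(m)$ such that every $r$-coloring of $\{1,\dots,N(m)\}$ either contains a monochromatic $k$-term progression or contains $m$ progressions of length $\ell$ that are color-focused at some point $b\in\{1,\dots,N(m)\}$. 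I would prove $P(m)$ by a second induction on $m$. For $P(1)$ one takes $N(1)=2W(\ell,r)$: a monochromatic $\ell$-term progression is guaranteed inside the first $W(\ell,r)$ integers, and a one-line estimate shows its focus lies below $2W(\ell,r)$. Once $P(r)$ is in hand the proof is finished, because $r$ progressions with pairwise distinct colors cannot be color-focused at a point $b$ without producing a monochromatic $k$-term progression: whatever color $\chi(b)$ is, it agrees with one of the $r$ progressions, which that progression then extends through $b$. Hence the first alternative of $P(r)$ holds for every $r$-coloring of $\{1,\dots,N(r)\}$, i.e. $W(k,r)\le N(r)<\infty$.

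The crux is the implication $P(m)\Rightarrow P(m+1)$. Given an $r$-coloring $\chi$ of $\{1,\dots,N(m+1)\}$ with no monochromatic $k$-term progression, partition the interval into $2T$ consecutive blocks of length $N(m)$, where $T=W(\ell,r^{N(m)})$ and $N(m+1)=2N(m)T$. Color each of the first $T$ blocks by the pattern $\chi$ induces on it (at most $r^{N(m)}$ possibilities) and apply the induction hypothesis on $k$ to find $\ell$ equally spaced blocks carrying the same pattern. Inside the first of these blocks $P(m)$ supplies $m$ progressions color-focused at an interior point $b$; since $\chi$ avoids monochromatic $k$-term progressions, $\chi(b)$ is a color not used by any of those $m$ progressions. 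Now ``straighten'' the picture: moving the $j$-th term of each of the $m$ progressions into the $j$-th of the identically patterned blocks (for all $j$) yields $m$ monochromatic $\ell$-term progressions in the original $m$ colors, and the $\ell$ translates of the point $b$ across those blocks form an $\ell$-term progression in the new color $\chi(b)$; all $m+1$ of these are color-focused at one common point, which the choice of $2T$ blocks keeps inside $\{1,\dots,N(m+1)\}$. That is precisely $P(m+1)$. The main obstacle is exactly the bookkeeping in this step — fixing the block length $N(m)$, the number $r^{N(m)}$ of outer colors, and the slack that prevents the common focus from falling off the right-hand end — together with the routine but indispensable observation that carrying a configuration from one block to an identically patterned block leaves every color unchanged.
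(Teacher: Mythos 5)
The paper does not prove this statement at all: it is van der Waerden's theorem, quoted as a known result with a citation to \cite{W}, so there is no in-paper argument to compare against. Your color-focusing double induction is the standard proof of that theorem and is correct as sketched --- the base cases, the definition of $P(m)$, the derivation of the theorem from $P(r)$, and the step $P(m)\Rightarrow P(m+1)$ with $N(m+1)=2N(m)\,W(k-1,r^{N(m)})$ (the factor $2$ indeed keeps the common focus $b+\ell D$ inside the interval, since $s+\ell t\le T+t\le 2T$) all check out.
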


This theorem was generalized by the following very strong statement of
Szemer\'edi \cite{S}.

\begin{theorem}
For every natural number $k$ and positive real number $\delta$
there exists a natural number $M$ such that every subset of
$\{1,\ldots,M\}$ of cardinality at least $\delta M$ contains an
arithmetic progression of length $k$.
\end{theorem}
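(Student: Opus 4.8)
The plan is to establish this by a \emph{density increment} argument --- the scheme common to Roth's proof of the $k=3$ case and to Gowers' quantitative proof of the general case. Fix $k$ and $\delta$ and suppose, toward a contradiction, that for arbitrarily large $N$ there is a set $A\subseteq\{1,\ldots,N\}$ with $|A|\ge\delta N$ and no arithmetic progression of length $k$; the aim is to bound $N$ purely in terms of $k$ and $\delta$, which then supplies the required $M$.

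First I would prove a counting lemma: a set of density $\delta$ that is ``pseudorandom'' in the appropriate sense contains about $\delta^{k}N^{2}$ arithmetic progressions of length $k$, comfortably more than the $O(N)$ degenerate ones once $N$ is large, so an AP-free $A$ cannot be pseudorandom. The right notion here is smallness of the Gowers uniformity norm of the balanced function $\mathbf{1}_A-\delta$; thus the problem reduces to an increment step: if $\|\mathbf{1}_A-\delta\|_{U^{k-1}}$ is \emph{not} small, then $A$ attains density at least $\delta+c(\delta)$ on a long sub-progression. For $k=3$ this is classical Fourier analysis --- a large non-trivial Fourier coefficient of $\mathbf{1}_A$ forces density $\ge\delta+c\delta^{2}$ on a progression of length $\gtrsim N^{1/3}$. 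After pulling the sub-progression back to an initial segment $\{1,\ldots,N'\}$ one iterates; since density cannot exceed $1$, the iteration, and hence $N$, is bounded.

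The hard part is the increment step for $k\ge 4$, where correlation with a single linear phase no longer governs $k$-AP counts: one needs the \emph{inverse theorem} for the $U^{k-1}$ norm, which identifies non-uniformity with correlation with a $(k-2)$-step nilsequence --- a genuine quadratic object already for $k=4$ --- and one must then convert that correlation into a density increment on a large sub-structure (a multidimensional Bohr-type set, eventually pulled back to a progression) with only a controlled loss in length. Two alternative routes trade this difficulty for another: Szemer\'edi's original proof applies the regularity lemma to an auxiliary graph and invokes a triangle/simplex removal lemma (pushing the work onto hypergraph regularity when $k\ge 4$), while Furstenberg's proof uses the correspondence principle to recast the statement as multiple recurrence, $\liminf_{N\to\infty}\frac1N\sum_{n=1}^{N}\mu\bigl(B\cap T^{-n}B\cap\cdots\cap T^{-(k-1)n}B\bigr)>0$ for positive-measure $B$, proved via a structure theorem for measure-preserving systems.

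I would therefore expect the counting lemma, the density-increment bookkeeping, and the outer iteration to be routine, with essentially all the effort going into the inverse theorem for $\|\cdot\|_{U^{k-1}}$ (or, on the combinatorial route, into hypergraph regularity and the removal lemma). Given the depth of that ingredient, in the present paper one simply cites Szemer\'edi's theorem; the sketch above records which black box is doing the work.
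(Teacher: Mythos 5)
This is Szemer\'edi's theorem; the paper does not prove it but states it as background and cites Szemer\'edi \cite{S}, which is exactly what you conclude one should do. Your sketch is an accurate survey of the known proof architectures (density increment via Gowers uniformity norms, Szemer\'edi's regularity/removal route, Furstenberg's ergodic-theoretic route), but it is a roadmap rather than a proof: in each route the essential difficulty --- the inverse theorem for the $U^{k-1}$ norm, hypergraph regularity and the removal lemma, or the structure theorem for measure-preserving systems --- is named and deferred rather than established. Since the paper's own treatment is simply a citation, your decision to treat the theorem as a black box matches it, and no further comparison is possible.
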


This means that ``large'' color classes force monochromatic arithmetic progressions.
In this paper we invesigate conditions on the color classes which force a
totally multicolored arithmetic progression of length three.

Assume that the integers in  $\{1,\ldots,n\}$ are colored by  $r$ colors. Can we always find an
arithmetic progression of length $k$ so that all of its elements are
colored with distinct colors? We call such colored arithmetic
progressions {\it rainbow} $\AP(k)$.

The answer to this question is ``No'', for $r\leq
\lfloor\log_3n+1\rfloor $. The following coloring $c$ of $\{1,
\ldots, n\}$, given by Jungi\'c, et al.~\cite{J}, demonstrates
this fact. Let $c(i)=\max \{ q: i\mbox{ is divisible by }3^q\}.$
This coloring has no rainbow arithmetic progressions of length  $3$ or more.

It is an open question to determine certain  conditions which force
the existence of rainbow arithmetic progressions. There are two
natural approaches which can be studied. First, one can fix the
number of colors and require that each color class is not ``too
small''. Second, one can require that each color class is not
``too big'' to guarantee some rainbow arithmetic progression.

The first  approach  for $\AP(3)$ and three colors, among others,
was studied in \cite{J} and completely resolved by Fon-Der-Flaass
and the first author as follows.
\begin{theorem}[\cite{AF}] \label{AF}
Let $[n]$ be colored in three colors, each color class has size larger than
$(n+4)/6$. Then there is a rainbow $\AP(3)$. Moreover, for each $n=6k-4$  there
is a coloring of $[n]$ in three colors with the smallest color
class of size $k$ and with  no rainbow
$\AP(3)$.
\end{theorem}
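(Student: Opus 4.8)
The statement has two halves: first, that every $3$-colouring of $[n]$ whose colour classes all have size larger than $(n+4)/6$ contains a rainbow $\AP(3)$; second, that for $n=6k-4$ this threshold is attained. I would prove the first half in contrapositive form: if $c\colon[n]\to\{R,G,B\}$ has no rainbow $\AP(3)$, then $\min(|R|,|G|,|B|)\le(n+4)/6$.

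The basic tool is the reflection condition: for all $m\in[n]$ and all $d\ge1$ with $m\pm d\in[n]$, the pair $\{c(m-d),c(m+d)\}$ is \emph{not} $\{R,G,B\}\setminus\{c(m)\}$; equivalently, for every step $d$ and every starting point, the thread $c(j),c(j+d),c(j+2d),\dots$ never contains three consecutive pairwise-distinct terms. The case $d=1$ already forces the word $c(1)c(2)\cdots c(n)$ to have no three consecutive distinct letters, so it is a concatenation of maximal \emph{alternating runs} (stretches using only two colours, in strict alternation) joined at repeated letters; recording the colour pair used by each run turns the colouring into a walk on the triangle with vertices $\{R,G,B\}$, and recording the run lengths supplies the rest of the data.

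The heart of the argument is a structural lemma: feeding the threads with $d\ge2$ into this picture, I would show that a rainbow-$\AP(3)$-free $3$-colouring is determined, up to a short list of cases, by a bounded ``skeleton'' (the pattern of colour pairs and the few positions where the colour changes) together with the run lengths. The driving fact is that an $\AP(3)$ having one colour strictly between the other two is rainbow, which forbids a colour from appearing deep inside the region controlled by the other two; iterating this over all common differences collapses the skeleton. What then remains is an extremal optimisation: for each admissible skeleton maximise $\min(|R|,|G|,|B|)$ over the free run lengths subject to the forced equalities (an $\AP(3)$ with one endpoint in a short colour-stretch and its other two points inside an alternating run forces a long monochromatic block there), and verify that the maximum is $\le(n+4)/6$, with equality exactly when $n\equiv2\pmod6$ and only for the skeleton(s) realised in the second half. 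I expect the exact constants $6$ and $+4$ to emerge here as boundary and parity corrections, and I expect this to be the principal obstacle: controlling all threads simultaneously so as to reduce to finitely many skeletons, and then extracting the exact value $(n+4)/6$ rather than merely its order of magnitude, demands a careful and somewhat lengthy case analysis (on the colours of the first few integers, on which colour is extremal, and on parities of run lengths).

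For the second half I would read the extremal colourings off the optimisation. For $n=6k-4$ one obtains a $3$-colouring that is invariant under $i\mapsto n+1-i$ up to interchanging two of the three colours, in which the smallest colour class has size exactly $k=(n+4)/6$; for $k=2$, $n=8$, it is $R,G,G,B,G,B,B,R$. One then checks directly that it has no rainbow $\AP(3)$ — most conveniently by verifying, for each common difference $d$, that every $\AP(3)$ with that difference repeats a colour (a finite check, or a short residue computation) — and the optimality of $k$ is exactly the first half of the theorem.
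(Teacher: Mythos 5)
First, a point of reference: Theorem~\ref{AF} is not proved in this paper at all --- it is imported from \cite{AF} and used as a black box (only in Lemma~\ref{merging}) --- so there is no in-paper proof to measure your attempt against. Judged on its own, your proposal is an outline rather than a proof, and the gap sits exactly where you yourself predict the difficulty. The preliminary observations are correct: the absence of a rainbow $\AP(3)$ is equivalent to the ``reflection'' condition on every arithmetic thread, and the $d=1$ threads do force the word $c(1)c(2)\cdots c(n)$ to decompose into two-colour alternating runs glued at repeated letters. But everything after that is a promise. The central claim --- that the threads with $d\ge 2$ collapse the colouring to a bounded list of ``skeletons'' with free run lengths, over which one then maximises $\min(|R|,|G|,|B|)$ --- \emph{is} the theorem, and you give no argument for it: no description of the admissible skeletons, no mechanism by which a thread of common difference $d$ interacts with the run decomposition, and no derivation of the constant $(n+4)/6$. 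The sentence ``an $\AP(3)$ having one colour strictly between the other two is rainbow'' is a restatement of the definition and does not by itself forbid a colour from appearing ``deep inside the region controlled by the other two''; specifying which colour, which region, and why the resulting bound is exactly $(n+4)/6$ (rather than $n/6+O(1)$ with some other correction term) is the entire content of the first half.

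The second half is in better shape but still incomplete. Your example $R,G,G,B,G,B,B,R$ for $n=8$, $k=2$ does check out: it has no rainbow $\AP(3)$ and its smallest class has size $2=(n+4)/6$. However, the theorem asserts a construction for \emph{every} $n=6k-4$, so you owe a general family of colourings together with a verification valid for all $k$ (e.g.\ a residue-class description whose rainbow-freeness is checked by a congruence argument, in the spirit of Lemma~\ref{c_0} for the coloring $c_0$), not a single spot check at $k=2$. As written, neither half of the statement is established.
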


The second approach was introduced and developed by Alon, et
al.~\cite{ACT}. It was called ``Sub-Ramsey numbers for arithmetic
progressions'' as a way to investigate the problem provided that
the size of the largest color class is bounded. Specifically, a
coloring of $[n]$ was called a {\bf sub-$k$-coloring} if every color
appears on at most $k$ integers. For a given $m$ and a given $k$, the {\it
Sub-Ramsey number}, $\sr(m,k)$, is defined to be the minimum $n_0$
such that any sub-$k$-coloring of $[n]$, $n>n_0$ contains a
rainbow $\AP(m)$.  When $m=3$, i.e., when the desired rainbow
arithmetic progressions are of size three, the following bounds
were proved in \cite{ACT}.
\begin{theorem}
As $k$ grows, $2k<\sr(3,k)\leq(4.5+o(1))k$.
\end{theorem}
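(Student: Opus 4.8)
The plan is to prove the two bounds separately, the lower one by an explicit colouring and the upper one by a ``doubling'' analysis of how colours are distributed along $[1,n]$.

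For the lower bound I would exhibit a rainbow-$\AP(3)$-free sub-$k$-colouring of an interval $[1,n]$ with $n>2k$. Colouring $\{1,\dots,k\}$ with colour $1$ and $\{k+1,\dots,2k\}$ with colour $2$ uses only two colours and so has no rainbow $\AP(3)$, giving $\sr(3,k)\ge 2k$. To gain the extra integer, give $2k+1$ a third colour and choose a balanced $2$-colouring of $[1,2k]$ so that in every three-term $\AP$ ending at $2k+1$ the two smaller terms receive the same colour. Since such an $\AP$ pairs an integer $x\in[k+1,2k]$ with $2x-(2k+1)$, these requirements say precisely that the ``doubling chains'' $\{2k+1-m,\ 2k+1-2m,\ 2k+1-4m,\dots\}\cap[1,2k]$, indexed by the odd integers $m$, must each be monochromatic; since these chains partition $[1,2k]$, have only $O(\log k)$ elements each, and include $\Theta(k)$ singletons, for large $k$ the chains can be split between the two colour classes so that each class gets exactly $k$ integers. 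The resulting colouring of $[1,2k+1]$ has no rainbow $\AP(3)$ (an $\AP$ avoiding $2k+1$ sees only two colours; one through $2k+1$ has its two smaller terms equally coloured), so $\sr(3,k)>2k$.

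For the upper bound, suppose for a contradiction that $c$ is a rainbow-$\AP(3)$-free sub-$k$-colouring of $[1,n]$ with $n\ge(4.5+\varepsilon)k$, $\varepsilon>0$ fixed and $k$ large. The basic tool is a \emph{doubling observation}: if $[1,m]$ uses a colour set $S$ and position $m+1$ carries a colour $c_0\notin S$, then in every $\AP$ $(m+1-d,\,m+1,\,m+1+d)$ the first two terms already differ in colour, so the third must repeat one of them; hence every integer of $[m+2,\min(2m+1,n)]$ has a colour in $S\cup\{c_0\}$, and $[1,\min(2m+1,n)]$ uses at most $|S|+1$ colours. Applying this at the first appearance of the third colour shows that the longest ``$2$-coloured prefix'' $[1,P]$ satisfies $2P+1\le 3k$, so $P<\tfrac32k$ (the alternative $2P+1>n$ is impossible since $P\le 2k<n/2$); the same argument run on the reversed colouring gives a ``$2$-coloured suffix'' $[Q,n]$ with $n-Q<\tfrac32k$, while a ``$3$-coloured suffix'' $[Q',n]$ has length at most $3k$ simply because three colour classes hold at most $3k$ integers. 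Since $n>4.5k$ and $P<\tfrac32k$, the blocks $[1,P]$ and $[Q',n]$ are disjoint.

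It remains to derive a contradiction from $\AP$s that span the gap between a low-colour block near $1$ and a low-colour block near $n$. For a middle element $m$ strictly between the two blocks, consider the $\AP$s $(i,m,2m-i)$ with $i$ in the left block (colour set $A$) and $2m-i$ in the right block (colour set $B$). If $A\cap B=\varnothing$, no colour for $m$ avoids a contradiction: $c(m)\notin A\cup B$ makes the $\AP$ rainbow; $c(m)\in A$ forces $c(2m-i)\in A\cap B=\varnothing$; and $c(m)\in B$ forces $c(2m-i)=c(m)$ for \emph{every} admissible $i$, and when $m$ is chosen so that the admissible range exceeds $k$ this over-fills the colour class $c(m)$. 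Hence $A$ and $B$ must intersect, and one finishes by running the same scheme with the $2$-coloured prefix against the $3$-coloured suffix and with the $3$-coloured prefix against the $2$-coloured suffix, in each case keeping track of which colours may recur in the gap and of the lengths of the admissible ranges of $i$. I expect this overlap bookkeeping — precisely how far apart the low-colour blocks at the two ends must be before a spanning $\AP$ is forced to be rainbow — to be the main obstacle, and it is the optimisation of these separations against the block lengths $\tfrac32k$ and $3k$ that produces the constant $4.5$ rather than a smaller one.
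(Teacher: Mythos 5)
Your lower bound is essentially complete and correct for large $k$, which is all the asymptotic statement requires: the three-term progressions ending at $2k+1$ impose exactly the monochromaticity of the doubling chains $\{2k+1-m,\,2k+1-2m,\dots\}$ for odd $m$; these partition $[1,2k]$ into classes of size $O(\log k)$ with roughly $k/2$ singletons, and a greedy assignment of the long chains followed by balancing with singletons splits $[1,2k]$ into two classes of size exactly $k$. (For context: the present paper does not prove this theorem at all --- it quotes it from Alon--Caro--Tuza and then supersedes both bounds via Theorem~\ref{srmain}, whose proof runs through the dual function $f(n)$, the solitary-colour structure, and the mod-$17$ extremal colouring $c_0$, yielding $\sr(3,k)=(17/8)k+O(1)$.)

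The upper bound, however, has a genuine gap, and you flag it yourself. What you actually establish is: the longest $2$-coloured prefix and suffix each have length less than $\tfrac32 k$, and a $3$-coloured prefix or suffix has length at most $3k$ for the trivial counting reason. These facts do not combine into a contradiction at $n=(4.5+\varepsilon)k$; the identity $\tfrac32 k+3k=4.5k$ only tells you that the $2$-coloured prefix and the $3$-coloured suffix are disjoint, which is not by itself problematic. Your proposed finishing move --- a spanning progression $(i,m,2m-i)$ with $i$ in a left block of colour set $A$ and $2m-i$ in a right block of colour set $B$ --- produces a contradiction only when $A\cap B=\varnothing$, and nothing you have proved forces the prefix and suffix colour sets to be disjoint; when they intersect, the case $c(m)\in A\cap B$ is perfectly consistent with every spanning progression. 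The sentence about ``running the same scheme \dots keeping track of which colours may recur in the gap'' is precisely where the derivation of the constant $4.5$ would have to live, and it is absent. Note also that your own doubling observation already yields more than you use (applied at the first occurrence of a fourth colour it shows the $3$-coloured prefix has length less than $2k$, not merely at most $3k$), which indicates that the bookkeeping you defer is not a routine optimisation but the actual substance of the upper-bound argument.
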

In that paper it was suggested that the lower bound is close to
the correct order of magnitude for $\sr(3,k)$. Here, we show that
the truth is away from both the lower and upper bounds. In
theorem~\ref{main}, we compute tight bounds for $\sr(3,k)$ in a
dual form.  In particular, theorem~\ref{main} implies the
following:
\begin{theorem}
For any $k\geq 1$, $(17/8)k-4\leq\sr(3,k)\leq (17/8)k +10$.
\label{srmain}
\end{theorem}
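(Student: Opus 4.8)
The statement Theorem~\ref{srmain} is announced as a consequence of Theorem~\ref{main}, which provides exact bounds for $f(n)$. So the proof is really a translation between the two quantities $\sr(3,k)$ and $f(n)$, together with plugging in the claimed formula $f(n) = \lb$ (up to the additive constants encoded by the macros in the preamble — note that $\lb$ expands to $\lceil 8(n-1)/17\rceil+1$). The first thing I would do is record the duality precisely: $f(n) \le k$ iff there is a sub-$k$-coloring of $[n]$ with no rainbow $\AP(3)$, so $\sr(3,k)$ is the largest $n$ for which $f(n) \le k$, i.e. $\sr(3,k) = \max\{\, n : f(n) \le k \,\}$. Equivalently, $\sr(3,k) < n \iff f(n) > k \iff f(n) \ge k+1$.

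\medskip\noindent
Next I would use monotonicity of $f$: since restricting a good coloring of $[n]$ to $[n-1]$ is still good, $f$ is nondecreasing in $n$. Combined with the exact formula $f(n) = \lceil 8(n-1)/17\rceil + 1$ for large $n$ (from Theorem~\ref{main}), I can invert it. The inequality $f(n) \ge k+1$ becomes $\lceil 8(n-1)/17\rceil \ge k$, i.e. $8(n-1)/17 > k-1$, i.e. $n > 1 + 17(k-1)/8 = (17k+9)/8$. So $\sr(3,k)$ is the largest $n$ with $n \le (17k+9)/8$ roughly, giving $\sr(3,k) = \lfloor (17k+9)/8 \rfloor$ modulo the $O(1)$ slack, and one checks $(17/8)k - 4 \le \lfloor(17k+9)/8\rfloor \le (17/8)k + 10$ for all $k \ge 1$ — the constants $-4$ and $+10$ are generous enough to absorb both the floor/ceiling rounding and the "$+O(1)$" by which the exact formula for $f(n)$ may fail at small $n$. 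This is the step where I would be slightly careful: the exact value of $f(n)$ in Theorem~\ref{main} holds only for $n$ sufficiently large, so for the finitely many small $n$ (equivalently small $k$) one needs the crude two-sided bound $f(n) = 8n/17 + O(1)$ stated in the abstract — which itself follows from exhibiting the extremal construction and a matching counting argument — to keep both inequalities valid; this is why the advertised window $[-4, +10]$ is asymmetric and wide rather than tight.

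\medskip\noindent
So the skeleton is: (i) state and prove the duality $\sr(3,k) = \max\{n : f(n) \le k\}$; (ii) invoke Theorem~\ref{main} for the exact value of $f(n)$ at large $n$ and the $O(1)$ estimate for all $n$; (iii) solve the resulting linear inequality in $n$ and verify the two claimed constants hold uniformly in $k \ge 1$. I expect the only real obstacle to be bookkeeping with the floors and ceilings together with the "sufficiently large" caveat in Theorem~\ref{main}: one has to make sure the additive constants $-4$ and $+10$ genuinely cover the regime where the clean formula is not yet available, and that the rounding goes the right way in each of the two inequalities. Everything else is a short, essentially mechanical computation once the duality is in hand.
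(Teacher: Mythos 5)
Your proposal follows the paper's own route exactly: it establishes the duality $\sr(3,k)=\max\{\,n: f(n)\le k\,\}$ (this is precisely Proposition~\ref{fandsr}) and then inverts the two-sided bounds on $f(n)$ supplied by Theorem~\ref{main} to obtain the stated window, which is all the paper does as well. The only blemish is the arithmetic slip $1+17(k-1)/8=(17k-9)/8$ rather than $(17k+9)/8$, but as you yourself note, the generous constants $-4$ and $+10$ absorb this and the rounding issues.
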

Moreover, for $k$ large enough, we determine the value of
$\sr(3,k)$  exactly.

\section{Main Results}

\begin{definition}
We define $f(n)$ to be the smallest
integer $k$ such that there is a coloring of $[n]$ with the
largest color class of size $k$
and with no rainbow $\AP(3)$.
\end{definition}

The following proposition allows us to determine $\sr(3,k)$ from
$f(n)$:
\begin{proposition}
   The value $\sr(3,k)$ is the largest value of $n$ such that
   $k\geq f(n)$. \label{fandsr}
\end{proposition}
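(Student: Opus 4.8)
The plan is to unwind the two definitions and show that the set $\{n : k \ge f(n)\}$ has the same upper boundary as the set characterizing $\sr(3,k)$. First I would recall the definitions precisely: $\sr(3,k)$ is the minimum $n_0$ such that \emph{every} sub-$k$-coloring of $[n]$ with $n > n_0$ contains a rainbow $\AP(3)$; equivalently, $\sr(3,k)$ is the largest $n$ for which there \emph{exists} a sub-$k$-coloring of $[n]$ with no rainbow $\AP(3)$. On the other hand, $f(n)$ is the smallest $k$ for which there exists a coloring of $[n]$ whose largest color class has size exactly $k$ (hence a sub-$k$-coloring) with no rainbow $\AP(3)$.

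The key observation is a monotonicity fact: if $[n]$ admits a sub-$k$-coloring with no rainbow $\AP(3)$, then so does $[n]$ for any $k' \ge k$ (the same coloring works), and hence $f(n) \le k$. Conversely, $f(n) \le k$ means there is a sub-$k$-coloring of $[n]$ with no rainbow $\AP(3)$. Therefore the condition ``$k \ge f(n)$'' is \emph{equivalent} to ``there exists a sub-$k$-coloring of $[n]$ with no rainbow $\AP(3)$.'' I would also note the trivial monotonicity in $n$: restricting a good coloring of $[n]$ to $[n']$ for $n' \le n$ still has no rainbow $\AP(3)$, so the set of $n$ admitting a good sub-$k$-coloring is downward closed, i.e., an interval $\{1, 2, \ldots, N\}$ for some $N$ (allowing $N = \infty$ in principle, though the upper bound in Theorem~\ref{srmain} rules that out).

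Putting these together: let $N(k)$ be the largest $n$ such that $k \ge f(n)$, equivalently the largest $n$ admitting a sub-$k$-coloring with no rainbow $\AP(3)$. Then for $n > N(k)$, every sub-$k$-coloring of $[n]$ contains a rainbow $\AP(3)$, and for $n = N(k)$ there is a sub-$k$-coloring with none; so $N(k)$ is exactly the minimum $n_0$ in the definition of $\sr(3,k)$, giving $\sr(3,k) = N(k)$. One edge case to check is whether the definition of $\sr(3,k)$ uses $n > n_0$ or $n \ge n_0$; with the strict inequality as written in the excerpt, the identification $\sr(3,k) = N(k)$ is clean, since $n_0 = N(k)$ is the largest value that still fails to force a rainbow $\AP(3)$.

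The main obstacle, such as it is, is purely bookkeeping: making sure the ``smallest $k$'' in the definition of $f$ and the ``largest color class of size $k$'' phrasing is correctly translated into the ``there exists a sub-$k$-coloring'' statement, using the monotonicity in $k$. There is no combinatorial difficulty here; the proposition is essentially an observation that $f$ and $\sr(3,\cdot)$ are inverse functions (in the appropriate staircase sense), and the proof is a short paragraph once the quantifiers are lined up.
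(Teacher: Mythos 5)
Your proposal is correct and is essentially the paper's own argument, just spelled out more fully: both rest on the equivalence between ``$f(n)\le k$'' and ``$[n]$ admits a sub-$k$-coloring with no rainbow $\AP(3)$,'' applied at $n=\sr(3,k)$ and at $n>\sr(3,k)$. The paper compresses this to two sentences (showing $f(\sr(3,k))\le k$ and deriving a contradiction from $f(\sr(3,k)+1)\le k$), while you additionally record the monotonicity observations that make the quantifier bookkeeping transparent.
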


\begin{proof}
   Since there exists a $k$-bounded coloring of $[\sr(3,k)]$
   with no rainbow $\AP(3)$, $f(\sr(3,k))\leq k$.  Assume
   that $f(\sr(3,k)+1)\leq k$, then there is a
   $k$-bounded coloring of $[\sr(3,k)+1]$ with no rainbow
   $\AP(3)$, a contradiction.
\end{proof}

For the rest of the paper, we analyze the function $f(n)$.
Theorem~\ref{main} immediately implies the conclusion we draw in
theorem~\ref{srmain}.

We find an extremal coloring $c_0$ with no rainbow $\AP(3)$ and
with largest color class of the
smallest possible size. \\

\noindent
{\bf Construction.}
$$ c_0(i)=\begin{cases}
           \bG, & {\rm if} \quad i\equiv 0\pmod{17}, \\
           \bR, & {\rm if} \quad i \equiv  \pm 1,\pm 2,\pm 4,\pm
           8\pmod{17},
           \\
           \bB, & {\rm if} \quad i \equiv \pm 3,\pm 5,\pm 6,\pm
           7\pmod{17}.
        \end{cases} $$
\noindent
Let $q(I)$ be the size of the largest color class of $c_0$ in the interval $I$ and
$$Q(n) = \min \{q(I): \mbox{$I$ has length $n$}\}.$$
It can be easily verified  that $Q(n)= \lceil 8(n-1)/17 \rceil + \epsilon$, where
$\epsilon =
\begin{cases}
1, & n \equiv  3,5 \pmod {17},\\
0, & {\rm otherwise}.
\end{cases}
$

\begin{theorem} \label{main}
Let $n_0= 2600$. If   $n \geq  n_0$ then
$$ f(n)= Q(n).$$
Any extremal coloring of $\{1, \ldots, n\}$ is
colored identically to a subinterval of $\mathbb{Z}$ colored by $c_0$.
Moreover, for any $n\geq 1$,
$$Q(n) - 4 \leq f(n)\leq Q(n).$$
\end{theorem}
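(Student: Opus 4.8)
The plan splits into the easy upper bound $f(n)\le Q(n)$, the lower bound $f(n)\ge Q(n)-4$ (both for all $n$), and the sharpenings — the exact value and the rigidity statement — which hold once $n\ge n_0$.

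\textbf{Upper bound.} First I would check that $c_0$ has no rainbow $\AP(3)$, from which $f(n)\le Q(n)$ is immediate: the restriction of $c_0$ to a length-$n$ interval realizing the minimum in the definition of $Q(n)$ is a coloring of a set order-isomorphic to $[n]$, it inherits the property of having no rainbow $\AP(3)$, and its largest color class has size exactly $Q(n)$. To see that $c_0$ is rainbow-$\AP(3)$-free, observe that $\{\pm1,\pm2,\pm4,\pm8\}$ is the cyclic subgroup of $(\Z/17\Z)^\times$ generated by $2$, i.e. the group of nonzero squares; hence $\bR$ is the class of quadratic residues, $\bB$ the class of non-residues, and $\bG$ the class of $0$ modulo $17$. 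Since $17\equiv 1\pmod 8$, both $-1$ and $2$ are squares mod $17$. Now take any $\AP(3)$ $x,x+d,x+2d$: if no term is $\equiv 0$, only the colors $\bR,\bB$ occur and it is not rainbow; if the middle term is $\equiv 0$ then $x(x+2d)\equiv -d^2$ is a square, so $x$ and $x+2d$ have equal quadratic character and equal color; if an end term is $\equiv 0$ then the other two terms multiply to $2d^2$, again a square, so they share a color. Thus $c_0$ has no rainbow $\AP(3)$, and the formula for $Q(n)$ is a routine check.

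\textbf{Lower bound.} Let $c$ be a rainbow-$\AP(3)$-free coloring of $[n]$ with largest color class of size $k$. The case of at most two colors is trivial for the lower bound (then $k\ge\lceil n/2\rceil$, which comfortably exceeds $Q(n)-4$), so assume there are at least three. The engine is the local condition equivalent to rainbow-$\AP(3)$-freeness: for every $g$ and every $d$ with $[g-d,g+d]\subseteq[n]$, the values $c(g-d),c(g),c(g+d)$ are not pairwise distinct. Fixing a color class $A$ and a point $g\in A$, this says precisely that if $g-d$ and $g+d$ both lie outside $A$ then $c(g-d)=c(g+d)$; equivalently, the coloring of $[n]\setminus A$ is invariant under reflection through $g$ wherever that reflection maps $[n]\setminus A$ into itself, and this holds \emph{simultaneously} for every $g\in A$. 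I would exploit this as follows: two points of $A$ at distance $\delta$ compose their reflections into an (approximate) translation by $2\delta$ of the coloring of $[n]\setminus A$, so if $A$ has short gaps the complement is forced to be nearly periodic with a short period. Carrying this out — controlling the ``caveat'' points whose reflections land in $A$, and choosing the class that should play the role of $\bG$ — one shows that outside a bounded set $c$ agrees with a periodic coloring; a finite optimization over the periodic rainbow-$\AP(3)$-free patterns then shows some class has density at least $8/17$, with the pattern of density exactly $8/17$ unique and equal to one period of $c_0$. Combining this with Theorem~\ref{AF} to dispose of configurations with several comparably large classes, and accounting for the $O(1)$ losses, gives $k\ge Q(n)-4$ for all $n$.

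\textbf{Exact value and rigidity.} For $n\ge n_0$ the same structural analysis, now with explicit (not merely $O(1)$) control of the errors coming from the ends of the interval and from the caveat points, upgrades the bound to $k\ge Q(n)$ and shows that any coloring attaining $k=Q(n)$ must coincide, on all of $[n]$, with a translate of the interval-coloring $c_0$; the threshold $n_0=2600$ is exactly the point past which these error terms are provably absorbed. I expect the main obstacle to be precisely this structural step — converting the web of reflection symmetries into the statement ``$c$ is, off a bounded set, the periodic coloring $c_0$'', uniformly over all possible numbers of colors — together with the bookkeeping needed to replace the asymptotic bound $8n/17+O(1)$ by the exact value $Q(n)$ and to pin down the extremal colorings. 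The quadratic-residue verification of the construction and the two-color case are routine.
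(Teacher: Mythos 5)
Your upper bound is correct and complete: the quadratic\-/residue description of $c_0$ ($\bR$ = nonzero squares mod $17$, using that $-1$ and $2$ are both squares since $17\equiv 1\pmod 8$) is a clean, conceptually equivalent repackaging of the paper's Lemma~\ref{c_0}, which verifies the same fact by tabulating multiplication by $2$ on $\{\pm1,\pm2,\pm4,\pm8\}$. The restriction-to-a-minimizing-interval step giving $f(n)\le Q(n)$ is exactly what the paper does.

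The lower bound, however, has a genuine gap: the entire structural core is asserted rather than proved. You write that ``if $A$ has short gaps the complement is forced to be nearly periodic with a short period'' and that ``one shows that outside a bounded set $c$ agrees with a periodic coloring,'' but this last claim is false for general rainbow-$\AP(3)$-free colorings and is only recovered in the paper after a substantial case analysis. Concretely, three things are missing. First, the choice of the class playing the role of $\bG$ is not free: one must first reduce to three colors (Lemma~\ref{merging}, which needs Theorem~\ref{AF}), then prove there is a \emph{solitary} color and that its neighbor set is monochromatic (Lemmas~\ref{solitary} and~\ref{neighbor_solitary}); without this, the ``reflection'' bookkeeping has no anchor. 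Second, your reflection calculus only yields translations by $2\delta$ for $\delta$ a gap of $\bG$, i.e.\ even periods; the actual periods $15$ and $17$ are odd and are forced by the \emph{endpoint} constraints $c(x)=c(2x)$ relative to a $\bG$ position, which drive the paper's enumeration of admissible $\bG$-$\bG$ interval colorings (Table~1, Lemma~\ref{up-to-15pattern}); your ``finite optimization over periodic patterns'' presupposes exactly this enumeration. Third, and most importantly, the paper's Case~2 --- a $\bG$-$\bG$ interval containing no $\bB\bB$ --- produces colorings that are \emph{not} eventually periodic; it is handled by a completely different doubling argument (the decomposition $I_1,I_2,I_3$ with $r(I_2')\ge r(I_1)$ from Lemma~\ref{I_1}, plus $g(I_2\cup I_3)\le 2$ from Lemma~\ref{I_3}), culminating in $M\ge(17n-57)/36$, which only beats $8(n-1)/17+3$ for $n\ge 2600$ --- this is where $n_0$ actually comes from, not from absorbing error terms in a periodicity argument. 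Since you explicitly flag the structural step as the expected obstacle, you have identified the right difficulty, but the proposal as written does not surmount it, and the rigidity statement (extremal colorings are translates of $c_0$) likewise depends on Lemmas~\ref{repeated_pattern} and~\ref{17}, which you do not supply.
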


\begin{corollary}
$$  \left \lceil \frac {8(n-1)}{17}\right   \rceil  \leq f(n) \leq \left \lceil \frac {8(n-1)}{17} \right \rceil +1,$$ for $n\geq 2600$.
Moreover
$$  \frac {8(n-1)}{17} - 4   \leq f(n) \leq  \frac {8(n-1)}{17}+2,$$ for $n\geq 1$.
\end{corollary}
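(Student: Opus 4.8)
The Corollary is an immediate consequence of Theorem~\ref{main} together with the stated evaluation of $Q(n)$, so the plan is really a plan for Theorem~\ref{main}. For $Q(n)$ itself: since $c_0(i)$ depends only on $i\bmod 17$, and the class $\bR$ occupies $8$ of the $17$ residues, $\bB$ another $8$, and $\bG$ just one, any window of length $n$ contains roughly $8n/17$ elements of $\bR$, roughly $8n/17$ of $\bB$, and about $n/17$ of $\bG$; minimizing the maximum class size over windows is a finite optimization that yields $Q(n)=\lceil 8(n-1)/17\rceil+\epsilon$ with $\epsilon\in\{0,1\}$, the correction $\epsilon=1$ occurring exactly for $n\equiv 3,5\pmod{17}$. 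Granting Theorem~\ref{main}, for $n\ge 2600$ we have $f(n)=Q(n)$, whence $\lceil 8(n-1)/17\rceil\le f(n)\le\lceil 8(n-1)/17\rceil+1$; and for every $n\ge 1$ the bounds $Q(n)-4\le f(n)\le Q(n)$ combine with $\frac{8(n-1)}{17}\le Q(n)\le\frac{8(n-1)}{17}+2$ (from $x\le\lceil x\rceil< x+1$ and $0\le\epsilon\le 1$) to give $\frac{8(n-1)}{17}-4\le f(n)\le\frac{8(n-1)}{17}+2$.

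For the upper bound $f(n)\le Q(n)$ in Theorem~\ref{main} I would first verify that $c_0$ has no rainbow $\AP(3)$. Note that $\bR$ is precisely the set of nonzero quadratic residues mod $17$, $\bB$ the non-residues, and $\bG=\{0\}$. An $\AP(3)$ in $\mathbb Z$ projects to a $3$-term AP $x,y,z$ in $\mathbb Z_{17}$; if none of $x,y,z$ is $\equiv 0$ it uses at most the two colors $\bR,\bB$. If some term is $\equiv 0$, then because $-1$ and $2$ are quadratic residues mod $17$ one checks directly that the other two terms are either both residues or both non-residues (if $y\equiv 0$ then $z\equiv -x$; if $x\equiv 0$ then $z\equiv 2y$; etc.), so again at most two colors appear. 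Hence $c_0$ is rainbow-free, and restricting it to a window of length $n$ attaining the minimum in the definition of $Q(n)$ exhibits a rainbow-free coloring of $[n]$ whose largest class has size exactly $Q(n)$.

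For the lower bound, let $c$ be any rainbow-free coloring of $[n]$ and let $R$ be a largest color class, $|R|=k$; the target is $k\ge Q(n)-4$ for all $n$, improved to $k\ge Q(n)$ when $n\ge 2600$. The mechanism to exploit is: if $x,y\notin R$ and one of $\tfrac{x+y}{2}$, $2x-y$, $2y-x$ lies in $[n]\cap R$, then $c(x)=c(y)$. Using that $R$ is a \emph{largest} class, one shows $R$ cannot be too sparse on any long subinterval --- otherwise the complement of $R$ there would be large, the forcing above would collapse most of it into a single color class of size $\le k<|R|$, contradicting maximality of $R$; quantifying this trade-off (which in $c_0$ is exactly $8/17$ versus $9/17$) gives $k\ge\frac{8}{17}n-O(1)$, and a version with explicit, boundary-robust constants yields $k\ge Q(n)-4$ with no hypothesis on $n$. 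For $n\ge 2600$ one then bootstraps: the slack in the density estimates is localized, the coloring on a long central subinterval is forced (up to renaming colors) to coincide with a window of $c_0$, and this pattern is propagated to the two ends using rainbow-freeness, so $c$ is a window of $c_0$ and its largest class has size $\ge Q(n)$. This gives $f(n)=Q(n)$ and the description of all extremal colorings.

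The main obstacle is exactly this rigidity step --- upgrading the averaged density inequality into an exact, position-by-position determination of the coloring, and thereby isolating the extremal configurations --- together with the bookkeeping of boundary effects: a bounded number of integers near $1$ and near $n$ behave irregularly, which is harmless for the $\pm 4$ estimate but must be fully controlled to pin down the sharp constant and the uniqueness statement, and this control is what forces the threshold $n\ge 2600$.
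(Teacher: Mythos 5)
Your deduction of the Corollary is correct and is exactly the paper's (implicit) argument: combine $f(n)=Q(n)$ for $n\ge 2600$, respectively $Q(n)-4\le f(n)\le Q(n)$ for all $n$, with $\lceil 8(n-1)/17\rceil \le Q(n)=\lceil 8(n-1)/17\rceil+\epsilon \le \lceil 8(n-1)/17\rceil+1 < 8(n-1)/17+2$. The remainder of your write-up is a sketch of Theorem~\ref{main} itself rather than of this Corollary, so I have not assessed it as part of this statement; I note only that your quadratic-residue description of $c_0$ (with $-1$ and $2$ both residues mod $17$) is a clean restatement of the paper's Lemma~\ref{c_0}, while the paper's actual lower-bound argument proceeds by an explicit structural case analysis of $\bG$-$\bG$ intervals rather than the density/rigidity bootstrap you describe.
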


\begin{remark}
We did not try to optimize the constant $n_0$. A more careful
analysis of the proof results in a smaller number. We believe that
in fact $f(n)=Q(n)$ for all values of $n$ and this must be a
coloring of some subinterval of $\mathbb{Z}$ for all but a very
small number of values of $n$.
\end{remark}

\section{Definitions and Notations, Outline of the proof}
Let $[n]=\{1, \ldots, n\}$. For convenience, sometimes we shall
use the closed interval notation  $[1,n]$ for $[n]$. Let $c: [n]
\rightarrow \{\bR, \bG, \bB\}$. We say that a color $X\in \{\bR,
\bG, \bB\}$ is {\bf solitary} if there is no $x\in [n-1]$ such
that $c(x)=c(x+1)=X$. For a set $S\subseteq [n]$, we denote by ${
r(S), g(S), b(S)}$ the number of elements in $S$ colored $\bR,
\bG, \bB$ respectively. We write $|\bR|= r([n])$,  $|\bG|=g([n])$,
$|\bB|=b([n])$. If all elements of $S$ have the same color $X$, we
write $c(S)=X$.

We say that the interval $[x, x+i]$ is {\bf $X$-$X$-interval} if
$c(x)=c(x+i+1)=X$ and $c(x+j)\neq X$ for all $1\leq j\leq i$, note
that the left $X$ is included in the interval but the right one is
not. For a color $X$, we define a set $N(X)$ of {\bf  neighbors of
X} as follows $N(X) = \{ i\in [n]:  c(i+1)=X \quad {\rm or} \quad
c(i-1)=X\}$. For a sequence of colors $A_0, A_1, \ldots, A_k$, ~\
$A_i\in \{\bR, \bG, \bB\}$, we say that a coloring $c$ {\bf
contains $A_0A_1 \cdots A_k$} in the interval $I$ if there is an
integer $x\in I$, such that $x+k\in I$ and $c(x+i)=A_i$, $i=0,
\ldots, k$. Sometimes we shall simply say that $I$ contains
$A_0A_1 \cdots A_k$. We use subintervals of $[1,n]$ or subsets of
$[n]$ wherever convenient.

In order to prove our upper  bound on $f(n)$, we consider an arbitrary coloring of $[n]$ with
no rainbow $\AP(3)$ and first reduce the  analysis to the case of three colors  only.
We show that there must be a solitary color, say $\bG$. Moreover we show that
each number in the neighbor set of $\bG$ must have the same color, say $\bR$.
I.e., each integer colored $\bG$ is surrounded by two integers colored $\bR$.
Therefore the interval $[1,n]$ can be split into $\bG$-$\bG$ intervals and perhaps
some initial and terminal intervals containing no  $\bG$.
Next, we show that either each $\bG$-$\bG$ interval has many integers colored $\bR$, thus
arriving at a conclusion that $|\bR|\geq Q(n)$  or  that there are not too many integers colored $\bG$ and
either $|\bR|$ or $|\bB|$  is at  least $(n-|\bG|)/2 \geq Q(n)$.

We present the proof in the section \ref{main_proof}, and all
necessary technical lemmas  in sections \ref{lemmas_general},
\ref{lemmas_specific}.

\section{Proof of Theorem \ref{main}}   \label{main_proof}

Let $c$ be a coloring of $[n]$ with no  rainbow $\AP(3)$. We shall
conclude that one of the color classes has size at least $Q(n)$.
By lemma  \ref{merging}, we can assume that $c$ uses three colors,
say $\bR, \bG, \bB$. Lemma \ref{solitary} implies an existence of
a solitary color, without loss of generality $\bG$. If there are
only two numbers of color $\bG$, then either $\bR$ or $\bB$ has
size at least $(n-2)/2\geq 8(n-1)/17 + 3> Q(n)$, for $n\geq n_0$
and $(n-2)/2\geq Q(n)-3$ for $n\geq 1$. Otherwise, by lemma
\ref{neighbor_solitary}, we can assume that the neighbor set of
$\bG$ is colored $\bR$. We can also assume that there are two
consecutive numbers colored $\bB$ in $[n]$; otherwise, the
cardinality of $\bR$ is at least $(n-2)/2> Q(n)$, for $n\geq n_0$
and $(n-2)/2>Q(n)-3$ for $n\geq 1$.

Since $\bG$ is a solitary color and $\bR$ is the color of its
neighborhood, we see that $c$ looks as follows:
$$**\cdots ** \bR\bG\bR ** \cdots  ** \bR\bG\bR **\cdots ** \bR\bG\bR ** \cdots **  \bR\bG\bR ** \cdots ** ,$$
where $*\in \{\bR, \bB\}$. Furthermore, there is a $\bB\bB$
somewhere in $[n]$.\\~\\

\noindent
{\bf CASE  1.} All  $\bG$-$\bG$-intervals contain  $\bB\bB$.\\
Lemma \ref{up-to-15pattern} proves that the smallest length of a
$\bG$-$\bG$ interval containing $\bB\bB$ is $15$ and there is no
such interval of length $16$. Assume first that there is such an
interval of length $15$. Then lemma \ref{repeated_pattern} shows
that this  coloring  must be very specific, in particular, it is
defined up to translation on all integers except, perhaps, every
$15^{\rm th}$ one. So, in that case, lemma  \ref{repeated_pattern}
gives that $|\bR|\geq  8(n-1)/15-1 \geq 8(n-1)/17 +3> Q(n)$ for
$n\geq n_0$ and $8(n-1)/15-1>Q(n)-3$ for $n\geq 1$. If the
smallest $\bG$-$\bG$ interval has length $17$ then lemma \ref{17}
says that the coloring of $[n]$ must be a translation of $c_0$ for
all integers except, perhaps, every $17^{\rm th}$ one. In this
case $|\bR|= Q(n)$. Finally, if all intervals have length at least
$18$, lemma \ref{up-to-15pattern} proves that in fact, the
smallest interval has length $21$. Then $|\bG|\leq n/21 +1$. Thus
either $|\bB|$ or $|\bR|$ is at least $(n- |\bG|)/2 \geq (10n -
11)/21>Q(n)$ for $n\geq n_0$ and $(10n - 11)/21>Q(n)-3$ for $n\geq
1$.
~\\~\\

\noindent {\bf CASE 2.} There is a  $\bG$-$\bG$-interval
containing  no $\bB\bB$. \\
We split interval $[1,n]$ and find a lower bound on the number of
integers colored $\bR$ in each of those subintervals. There are
two subcases we shall treat. In case $2.1$, the initial
subinterval contains at least three $\bG$s, and we use our
structural lemmas. Otherwise, we have case $2.2$, in  which  we
apply case $1$ to a special subinterval. We shall define the
following special subintervals.\\
\indent $\circ$ $I_1$ is  the
longest initial segment of $[n]$ containing no  $\bB\bB$ and
ending with $\bG$, $I_1=[1,l]$,\\
\indent $\circ$
$I_2$ is  an interval following $I_1$, containing no  $\bB\bB$ except for the last two positions
which are colored $\bB\bB$,\\
\indent $\circ$
$I_3= [n]-I_1-I_2$,\\
\indent $\circ$
$I_0\subseteq I_1$ is the longest initial segment of $[1,n]$ containing no $\bG$,\\
\indent $\circ$
$I_2' = [l+1, 2l-1]$, $I_2''= I_2\setminus I_2'$.\\
\indent $\circ$
$I_t$ is the longest terminal subinterval of $[n]$ containing no $\bB\bB$.\\

{\bf Case 2.1} Let $g(I_1) \geq 3$. Let $g_i$ be the number of
$\bG$-$\bG$ intervals of length $i$ in $I_1\setminus \{l\}$. Lemma
\ref{GRG}(b) and \ref{GRG}(c) claims that there is no $\bG\bR\bG$
or $\bG\bR\bR\bG$ in $[n]$. Thus each $\bG$-$\bG$ interval in
$[n]$ has  length at least $4$ and $g_i=0$ for $i\leq 3$. In
particular, $|I_1| = |I_0| + \sum _{i\geq 4} i g_{i}+1$.

Since  $I_1$ contains no  $\bB\bB$ we have
\begin{equation}\label{r(I1)}
r(I_1)\geq  |I_0|/2 + \sum _{i\geq 4} ( i/2 +1)  g_{i}.
\end{equation}

Lemma \ref{I_1} states  that  $I_2' \subseteq I_2$ and $r(I_2')\geq r(I_1)$.
Since  $I_2''$ does not contain any $\bB\bB$  except at the last two positions,
$r(I_2'') \geq  |I_2''|/2 - 1$.
Thus
\begin{equation}\label{r(I2)}
r(I_2) =r(I_2')+r(I_2'') \geq r(I_1) + |I_2''|/2 -1.
\end{equation}
Finally, by lemma \ref{I_3},
\begin{equation} \label{r(I3)}
r(I_3)\geq (|I_3|-3)/4.
\end{equation}

We can summarize (\ref{r(I1)}), (\ref{r(I2)}) and  (\ref{r(I3)}) as follows.

\begin{eqnarray} \nonumber
|\bR| & = & r(I_1)+r(I_2')+r(I_2'') +r(I_3) \nonumber \\
& \geq & 2r(I_1)+ |I_2''|/2  -1  + (|I_3|-3)/4 \nonumber\\
& = & 2r(I_1) + |I_2''|/2 - 1 + (n - |I_1| - |I_2'|-|I_2''|-3)/4 \nonumber\\
& \geq & (n-3)/4 -1 + 2r(I_1) - |I_1|/2  \nonumber\\
& \geq & (n-7)/4 + 2\left[|I_0|/2+ \sum _{i\geq 4} g_{i} (i/2+1)\right] - (1/2) \left[ |I_0|+\sum_{i\geq 4} i g_{i}+1\right] \nonumber\\
& \geq & (n-9)/4 + |I_0|/2+ \sum_{i\geq 4} g_{i}(i/2+2)\nonumber \\
& \geq & (n-9)/4 + 4\sum_{i\geq 4} g_{i} \nonumber \\
& = & (n-9)/4 + 4 (g(I_1)-1).\nonumber
\end{eqnarray}
\noindent
Lemma \ref{I_3} implies  that $g(I_1)-1 = |\bG|-1-g(I_2\cup I_3)\geq |\bG|-3.$
So,
$$ |\bR|\geq (n-9)/4+4(|\bG|-3) . $$

Let $M=\max\{|\bR|,|\bB|\}$.  By definition, it is the case that
$|\bR|\leq M$ and $|\bG|\geq n-2M$.  As a result,
$$M\geq |\bR|  \geq  (n-9)/4 + 4(|\bG|-3)\geq (n-9)/4+4(n-2M-3).$$
Thus
$$ M \geq  \frac{17n-57}{36} \geq 8(n-1)/17+3\geq Q(n), $$
for  $n\geq n_0$.
We also have that $M \geq (17n-57)/36 \geq 8(n-1)/17>Q(n)-3$ for
all values of
$n\geq 1$. \footnote{Note that this is the only time we need the value of $2600$  for $n_0$, in all other calculations, a smaller bound of $900$ is sufficient.} \\

{\bf Case 2.2}
Let $g(I_1)\leq 2$.  By symmetry, we can also assume that $g(I_t)\leq 2$,
otherwise we can apply the previous calculation to the coloring defined as
$c'(i)  = c(n+1-i)$, $i\in [n]$.
Let $J= [n]\setminus (I_1 \cup I_t)$.
If $J$ contains no $\bG$ then $g([n])\leq 4$ and either $|\bR|$ or $|\bB|$
is at least $(n-4)/2 \geq 8(n-1)/17 + 3\geq  Q(n)$  for $n\geq n_0$,
moreover $(n-4)/2 \geq Q(n)-3$ for all $n\geq 1$.

If there is at least one  $\bG$ in $J$ then we  conclude that all
$\bG$-$\bG$ intervals in $J\cup \{l\}$ contain $\bB\bB$ by lemma
\ref{BB} and  that  $r(I_1)\geq |I_1|/2$ and $r(I_t)\geq |I_t|/2$.
As in case 1, we observe that if $J$ contains a $\bG$-$\bG$
interval of length $15$ then $|R| \geq 8(n-1)/15-1 \geq 8(n-1)/17
+3 \geq Q(n)$, for $n\geq n_0$.
In addition, if $J\cup \{l\}$ contains a $\bG$-$\bG$ interval of length $17$ then
lemma \ref{17} gives  that the coloring must be a translation  of $c_0$ except, perhaps on every
$17^{\rm th}$ position.  In this case, $|\bR| \geq Q(n)$.
Otherwise, the length of each $\bG$-$\bG$ interval is at least $21$.  This  follows from lemma \ref{up-to-15pattern}.
In that case,
$g(J)\leq  |J|/21+1 $. Thus $ |\bG| \leq g(J) + 4 \leq  (n-4)/21 + 5$.
Therefore either  $|\bR|$ or $|\bB|$ is at least $ (n - |\bG|)/2\geq  (10n-51)/21  \geq 8(n-1)/17+3>  Q(n)$,
for $n\geq n_0$,
moreover $|\bR|  \geq  8(n-1)/17 -1\geq  Q(n)-4$ for all $n\geq 1$.

This case concludes the proof of the theorem.
\qed

\section{General lemmas for colorings with no rainbow $\AP(3)$s}\label{lemmas_general}

\begin{lemma}\label{c_0}
The coloring $c_0$ does not have any rainbow $\AP(3)$s.
\end{lemma}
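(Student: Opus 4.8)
The claim is that the coloring $c_0$ contains no rainbow $\AP(3)$. The plan is to verify this by a direct case analysis on the residues modulo $17$ of an arithmetic progression $\{a, a+d, a+2d\}$. Since $c_0$ is periodic with period $17$, the colors of the three terms depend only on $a \bmod 17$ and $d \bmod 17$, so it suffices to examine the finitely many (at most $17 \cdot 17 = 289$) residue pairs; and in fact, since any $\AP(3)$ that is rainbow would have to use the color $\bG$ on exactly one term, we may reduce further as follows.

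First I would record the color classes as subsets of $\Z/17\Z$: the $\bG$-class is $\{0\}$, the $\bR$-class is $R = \{\pm 1, \pm 2, \pm 4, \pm 8\}$, and the $\bB$-class is $B = \{\pm 3, \pm 5, \pm 6, \pm 7\}$. The key algebraic observation is that $R$ is exactly the set of nonzero quadratic residues modulo $17$ (indeed $2$ is a QR mod $17$, so $1,2,4,8$ are QRs, and $-1$ is also a QR mod $17$, hence $R$ is closed under negation and equals the QR subgroup of order $8$), while $B$ is the set of quadratic non-residues. This is the structural heart of the construction. A rainbow $\AP(3)$ must hit all three classes, so it must contain the unique residue $0$ in one position; the other two positions must be one element of $R$ and one element of $B$.

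Now I would split into the three cases according to which term of $\{a, a+d, a+2d\}$ is $\equiv 0$. If the middle term $a+d \equiv 0$, then the outer terms are $-d$ and $d$, which lie in the same class since both $R$ and $B$ are closed under negation — so they cannot be one red and one blue, and this case yields no rainbow $\AP(3)$. If the first term $a \equiv 0$, then the other two terms are $d$ and $2d$; since $2$ is a quadratic residue, multiplication by $2$ preserves the QR/non-QR dichotomy, so $d$ and $2d$ lie in the same class — again no rainbow $\AP(3)$. The case $a+2d \equiv 0$ is symmetric to the case $a \equiv 0$ (the outer terms are $-2d$ and $-d$, again in the same class). Hence no rainbow $\AP(3)$ exists.

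The only genuine content — and the step I'd expect to be worth double-checking rather than being a true obstacle — is the arithmetic fact that $R$ coincides with the quadratic residues mod $17$ and in particular that $2$ is a QR and $-1$ is a QR mod $17$ (equivalently $17 \equiv 1 \pmod 8$), which makes both $R$ and $B$ invariant under $x \mapsto -x$ and under $x \mapsto 2x$. Once that is in hand the rest is a three-line case check. (If one prefers to avoid the number-theoretic remark entirely, one can simply enumerate all pairs $(a \bmod 17, d \bmod 17)$ with $d \not\equiv 0$ and check that the three colors are never all distinct; this is a routine but longer verification.)
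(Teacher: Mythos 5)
Your proof is correct and follows essentially the same route as the paper: split into cases according to which term of the $\AP(3)$ is colored $\bG$ (i.e.\ is $\equiv 0 \pmod{17}$), then use that the $\bR$- and $\bB$-classes are closed under negation (middle-term case) and under multiplication by $2$ (endpoint cases). The only difference is cosmetic: the paper checks closure under doubling by an explicit table, while you deduce it from the (correct) observation that $R=\{\pm1,\pm2,\pm4,\pm8\}$ is exactly the set of nonzero quadratic residues mod $17$ and that $2$ and $-1$ are both residues — a nicer conceptual justification of the same step.
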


\begin{proof}
Consider  $\AP(3)$ at positions $i<j<k$ with $c(j)=\bG$. Then
$j=0\pmod{17} $ and  then $i=-k\pmod {17}$. Therefore, by
construction, $c(i)=c(k)$ and this $\AP(3)$ is not rainbow.

Now, let us have $\AP(3)$ at positions $i<j<k$ such that
$c(i)=\bG$. Then, since $i=0\pmod {17}$ we have $k=2j\pmod {17}$.
We claim that $c(j)=c(k)$ in this case simply by multiplying the
numbers in corresponding congruence classes by two as follows:

\begin{center}
\begin{tabular}{|l||r|r|r|r|r|r|r|r|} \hline
$x$ & 1 & 2 & 4 & 8 & 3 & 5 & 6 & 7 \\ \hline
$2x\pmod{17}$ & 2 & 4 & 8 & -1 & 6 & -7 & -5 & -3 \\ \hline
\end{tabular}
\end{center}

Therefore, in this case we see that this $\AP(3)$ is not rainbow
and  there is no rainbow $\AP(3)$ in our coloring.
\end{proof}

\begin{lemma}\label{merging}
Let $c$ be a coloring of $[n]$ with no rainbow $\AP(3)$, $n\geq
21$ and every color class of size at most $m$, $ (n+4)/6 \leq m <
(n-4)/2$. Then there is a coloring $c'$ of $[n]$ with no rainbow
$\AP(3)$, in three colors with each color class being the union of
some color classes of $c$ and such  each color class of $c'$ has
size at most $m$.
\end{lemma}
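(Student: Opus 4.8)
The plan is to take the given coloring $c$ of $[n]$, which may use many colors, and merge color classes until only three colors remain, while maintaining the two invariants that matter: no rainbow $\AP(3)$ is created, and no merged class exceeds size $m$. The key observation is that merging two color classes can never \emph{create} a rainbow $\AP(3)$ — if an $\AP(3)$ had three distinct colors before a merge, it still has at least two distinct colors after (and an $\AP(3)$ that was non-rainbow stays non-rainbow). So the only real constraint is the size bound $m$. Thus the task reduces to: given classes $S_1,\dots,S_t$ with $\sum|S_i| = n$ and each $|S_i|\le m$, partition the index set into three groups each of whose total size is at most $m$. This is possible precisely because $m \ge (n+4)/6 > n/6$ together with $m < (n-4)/2$ gives us enough room; more carefully, I would run a greedy bin-packing argument.

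First I would dispose of trivial cases: if $c$ already uses at most three colors, take $c'=c$. If some color class already has size $\ge m$, there is nothing to merge into it beyond what keeps it at size $m$; but since every class has size at most $m$ by hypothesis, this is automatic. Next, the main step: sort the classes in decreasing order of size and assign them greedily to three bins $B_1, B_2, B_3$, always placing the next class into the currently smallest bin. I claim that at the end each bin has size at most $m$. Suppose not: some bin, say $B_1$, receives a class $S$ that pushes its size above $m$. At the moment $S$ was added, $B_1$ was the smallest bin, so $B_2$ and $B_3$ each had size at least $|B_1| \ge (\text{size of } B_1 \text{ before adding } S)$. Writing $s = |S| \le m$ and letting $a$ be the size of $B_1$ before the insertion, we have $a + s > m$, while $|B_2|, |B_3| \ge a$ at that moment, so the total already placed plus $s$ is at least $3a + s$. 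Combined with $a > m - s \ge m - m = 0$ — I need the sharper bound: since $a$ is the minimum, $3a \le n - s$, so $a \le (n-s)/3$, hence $m < a + s \le (n-s)/3 + s = (n+2s)/3 \le (n+2m)/3$, giving $m < n$, which is too weak. So the naive greedy needs the extra leverage that $s$ is small relative to $m$ when the total is near $3m$; I would instead argue by contradiction on the final configuration directly: if $B_1 > m$, remove its last-added class $S$; then $B_1 - s \le$ the size of whichever of $B_2,B_3$ was smallest when $S$ was placed, and summing the final sizes $|B_1|+|B_2|+|B_3| = n$ forces $2(|B_1|-s) + |B_1| \le n$ roughly, i.e. $3|B_1| \le n + 2s \le n + 2m$, so $|B_1| \le (n+2m)/3$. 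Now I use $m \ge (n+4)/6$, i.e. $n \le 6m-4$, to get $|B_1| \le (6m-4+2m)/3 = (8m-4)/3$ — still not $\le m$. This means the \emph{last} class alone cannot be the culprit; the right statement is that \emph{before} adding $S$ the bin had size $a$ with $3a \le n - s$ (the other two bins were each $\ge a$ and the remaining classes unplaced), so $a \le (n-s)/3 \le (n-1)/3 \le (6m-5)/3 < 2m$, and since $s \le m$... the obstruction is exactly this arithmetic.

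The main obstacle, then, is precisely this packing inequality: showing three bins suffice when $n \le 6m - 4$. The honest fix is to not use pure largest-first greedy but to observe that any class of size $> m/... $ — rather, to use the classical fact that a list of numbers each at most $m$ summing to $S$ can be packed into $\lceil 2S/m \rceil$-ish bins of capacity $m$ via first-fit, and here $2n/m \le 2(6m-4)/m < 12$, which is far too lossy. So I would instead argue more carefully, exploiting that we only need \emph{three} bins: partition greedily, and if a bin would overflow, the two other bins together hold at least $n - m$ (since the overflowing one holds at most... ), and with $n \ge 21$ and $(n+4)/6 \le m$ one checks the small leftover fits. Concretely: let $B_3$ be the bin of smallest final size; then $|B_3| \le n/3 \le (6m-4)/3 < 2m$, and $|B_1| + |B_2| = n - |B_3| \ge n - 2m + \tfrac{4}{3} > n - 2m$. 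Using $n > 2m$ (which follows from $m < (n-4)/2$, i.e. $n > 2m+4$), there is genuine room, and a direct two-bin balancing of the classes assigned to $B_1 \cup B_2$ — again largest-first, breaking ties — keeps both at most $\lceil (|B_1|+|B_2|)/2 \rceil + m/2 \le \dots \le m$, where the final inequality is where $m \ge (n+4)/6$ enters. I expect this last arithmetic verification — combining $n > 2m+4$ on one side with $n \le 6m-4$ on the other to squeeze the three bin sizes below $m$ — to be the crux of the argument, and the rest (the no-rainbow-$\AP(3)$ preservation and the reduction to packing) to be routine. $\qed$
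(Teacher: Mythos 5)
There is a genuine gap: your reduction of the lemma to a pure bin-packing statement is false, and the missing ingredient is Theorem~\ref{AF}. Three bins of capacity $m$ have total capacity $3m$, and the hypothesis only guarantees $m\geq (n+4)/6$, so $3m$ can be as small as $(n+4)/2<n$; no packing argument, greedy or otherwise, can place $n$ elements into three classes each of size at most $m$ in that regime. This is exactly why every arithmetic attempt in your write-up bottoms out at bounds like $(n+2m)/3$ or $(8m-4)/3$ that are not $\leq m$: the inequality you are trying to prove simply fails for arbitrary families of classes of size at most $m$ summing to $n$. The content of the lemma is not that the packing always succeeds, but that the no-rainbow-$\AP(3)$ hypothesis forbids the configurations for which it would fail.

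The paper's proof uses the one observation you do make correctly --- that a rainbow $\AP(3)$ in a merged coloring would already be rainbow in $c$ --- in the opposite direction, as a weapon rather than a sanity check. By Theorem~\ref{AF}, a three-coloring of $[n]$ in which every class has size greater than $(n+4)/6$ must contain a rainbow $\AP(3)$; hence in any merging of the classes of $c$ into three groups, at least one group is forced to have size at most $(n+4)/6\leq m$. The argument then splits on how many original classes exceed $(n+4)/6$: if at least two do, put two of them in their own groups and dump everything else into the third, which Theorem~\ref{AF} then forces to have size at most $(n+4)/6$; if exactly one or none do, a greedy aggregation produces three groups all of size greater than $(n+4)/6$ (this is where $m<(n-4)/2$ and $n\geq 21$ enter), yielding a rainbow $\AP(3)$ and hence a contradiction, so those cases cannot occur at all. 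Without invoking Theorem~\ref{AF} or some equivalent structural fact about rainbow-$\AP(3)$-free colorings, the lemma cannot be proved; the gap in your argument is a missing idea, not a missing computation.
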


\begin{proof}
Let $A_1, A_2, \ldots$ be the color classes of $c$. Note first
that  if $c'$ is formed by merging color classes of $c$ then $c'$
does not have  rainbow $\AP(3)$s. If there were a rainbow $\AP(3)$
in $c'$, then it  must be a rainbow $\AP(3)$ in $c$, a
contradiction.

Assume first that there are two color classes $A_1$ and $A_2$ of
sizes more than  $(n+4)/6$. Consider $S=[n]-A_1-A_2$. Let the
color classes of $c'$  be $A_1,A_2,S$. If $|S|> (n+4)/6$ then the
new color are all of sizes at least $(n+4)/6$, thus there is a
rainbow $\AP(3)$ in $c'$ by Theorem \ref{AF}, a contradiction.
Otherwise, $|S|\leq (n+4)/6\leq m$ and  all  color classes in $c'$ have sizes at most $m$.

Now, assume that there is exactly  one color class of size more than  $(n+4)/6$, say $A_1$.
Let $T= A_2\cup A_3 \cup \cdots \cup A_q$ such that $|T|> (n+4)/6$ but
$|T \setminus A_q|\leq (n+4)/6$.  Then, we see that $|T|\leq (n+4)/3$. Therefore,
$n-|T|-|A_1|\geq n-(n+4)/3-m> (n+4)/6$. If we  make the
new color classes $A_1,T,[n]\setminus (T\cup A_1)$, then by Theorem \ref{AF},
there is a rainbow $\AP(3)$ in $c'$, a contradiction.

Finally, if each color class has cardinality less than $(n+4)/6$
then we choose color classes of $c'$ greedily. Let $B_1= A_1\cup
A_2\cup \cdots A_q$ and $B_2= A_{q+1} \cup \cdots A_r$ be two new
color classes such that $(n+4)/6< |B_i|\leq  (n+4)/3$, $i=1,2$.
Let $B_3=[n]\setminus (B_1\cup B_2)$. Then $|B_3|\geq (n-8)/3$. If
$n\geq 21$, then $|B_3|> (n+4)/6$ and we again apply Theorem
\ref{AF} to get a rainbow $\AP(3)$ in $c'$ a contradiction.
\end{proof}

\begin{lemma}[\cite{AF}]
Let $c$ be a coloring of $[n]$ in three colors  with no rainbow
$\AP(3)$. Let there be integers $x$ and $z$,  $1\leq x<z<n$ such
that $c(x)=c(x+1)=X$ and $c(z)=c(z+1)=Z$, $X\neq Z$. Then there is
$w$, $x<w<z$ such that ($c(w)=X$, $c(w+1)=Z$) or
 ($c(w)=Z$, $c(w+1)=X$). \label{BBRR}
\end{lemma}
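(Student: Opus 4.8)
The plan is to argue by contradiction. Suppose no such $w$ exists; since $(x,x+1)$ carries $X,X$ and $(z,z+1)$ carries $Z,Z$, this says precisely that on $J:=[x,z+1]$ the word $c(x)c(x+1)\cdots c(z+1)$ begins with $XX$, ends with $ZZ$, and never has $X$ adjacent to $Z$. Writing $Y$ for the third color, I would exhibit a rainbow $\AP(3)$, contradicting the hypothesis on $c$. The only input I use is that no $\AP(3)$ is rainbow, in two guises: (a) three consecutive integers never carry all three of $X,Y,Z$; and (b) if an $\AP(3)$ has one entry colored $X$ and one colored $Z$, its third entry is colored $X$ or $Z$. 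Since $x$ and $x+1$ have opposite parities, (b) applied to $\{x,x+1\}$ and any $Z$-position $q\in J$ gives a point $m$ with $x\le m<q$ and $c(m)\in\{X,Z\}$; symmetrically from $\{z,z+1\}$.

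First I would record the run structure. Between the opening $X$-run and the closing $Z$-run the word has a $Y$; let $y_0$ be the first and $y_1$ the last. No $Z$ precedes $y_0$ --- the earliest such $Z$ would, by the no-$XZ$ condition, be immediately preceded by a $Y$ --- so $[x,y_0-1]$ is the opening run, all $X$ of length $\ge2$, and symmetrically $[y_1+1,z+1]$ is the closing run, of length $\ge2$. If $y_0=y_1$, then $(y_0-1,y_0,y_0+1)$ reads $X,Y,Z$ and we are done. Otherwise a ``spreading'' step: for $1\le j\le y_0-x$ the $\AP(3)$ $(y_0-j,y_0,y_0+j)$ has colors $X,Y,c(y_0+j)$, forcing $c(y_0+j)\neq Z$; hence the first $Z$ of $J$, call it $v$, satisfies $v\ge 2y_0-x+1\ge x+5$, and symmetrically the last $X$, call it $u$, satisfies $u\le 2y_1-z-2$. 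Applying (b) to $\{x,x+1\}$ and $v$ gives a point $m$ with $c(m)\in\{X,Z\}$ and $m<v$, so $c(m)=X$ (as $v$ is the first $Z$); and $m\ge\frac{x+v}{2}\ge y_0+\frac12$, so $m\ge y_0+1$: an $X$ strictly past the opening run. (In particular not all $X$'s lie in the opening run; the mirror holds for the $Z$'s.)

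For the rest I would induct on $z-x$; the cases $z-x\le 4$ are settled by inspection (one always hits an $X$-$Z$ adjacency --- contradicting the assumption --- or a rainbow $\AP(3)$ of difference $1$ or $2$). For the inductive step, guise (a) forces $c(v-1)=c(v-2)=Y$; if $m\ge v-2$ we are already done (since $c(m)=X$), so $m\le v-3$, and recall $m\ge x+3$. Case 1: $c(m-1)=X$ or $c(m+1)=X$. Then a doubled $X$-block $\{m-1,m\}$ or $\{m,m+1\}$ lies strictly inside $(x+1,z)$; pairing it with $\{z,z+1\}$ gives a configuration of exactly the same shape with strictly smaller first-to-last distance, and the induction hypothesis yields a suitable $w$ inside it, hence inside $(x,z)$ --- a contradiction. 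Case 2: $c(m-1),c(m+1)\ne X$; being also $\ne Z$ (they precede $v$) they both equal $Y$, so $m$ is an isolated $X$ between $Y$'s. Here I would iterate the midpoint construction of (b), driving a correctly parity-aligned $X$ steadily rightward toward $v$ until it either meets a doubled $X$-block (Case 1) or reaches $v-3$, at which point $(v-3,v-1,v+1)$ reads $X,Y,Z$ --- unless $c(v+1)=Y$, in which case one runs the mirror construction at the last $X$ $u$ instead.

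The hard part is this last iteration. When $x\not\equiv z\pmod 2$, the two doubled end-blocks $\{x,x+1\}$ and $\{z,z+1\}$ always produce the \emph{same} midpoint, so no single midpoint reduction collapses the problem to a two-block configuration; one is genuinely forced through the isolated-$X$ walk, and making it terminate while keeping $v$, $u$, $m$ and the forced $Y$-runs safely inside $J$ is the real bookkeeping. Everything else is a routine application of (a) and (b).
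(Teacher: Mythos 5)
This lemma is quoted in the paper from \cite{AF} without proof, so there is no in-house argument to measure you against; I can only assess your sketch on its own terms. The framework you set up is sound: facts (a) and (b), the run structure, the spreading step showing $v\ge 2y_0-x+1$, and the midpoint argument producing an $X$ at some $m$ with $y_0+1\le m\le v-3$ are all correct, and Case 1 (an interior $XX$ block) does reduce correctly to a shorter configuration. But the step you yourself flag as ``the hard part'' is not bookkeeping --- it is where the argument actually fails. Work in a minimal counterexample (the bottom of your induction): then there is no $XX$ block and, by symmetry, no $ZZ$ block starting strictly inside $(x,z)$, since either would pair with an end-block to give a shorter configuration. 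So Case 1 can never fire, every interior $X$ and $Z$ is isolated, and your walk --- if it terminates at all --- must terminate in the endgame. That endgame is broken: the $\AP(3)$ $(v-3,v-1,v+1)$ is rainbow only if $c(v+1)=Z$, i.e.\ only if $\{v,v+1\}$ is a $ZZ$ block. But the mirror of your own step 4 (midpoint of the last $X$, $u\le z-4$, with $z$ or $z+1$) produces a $Z$ at a position at most $z-2$, so the first $Z$ satisfies $v<z$; then $c(v+1)$ is neither $X$ (no $XZ$ adjacency) nor $Z$ (no interior $ZZ$ block), hence $c(v+1)=Y$ and your $\AP(3)$ reads $X,Y,Y$. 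The fallback you offer --- the mirror construction at $u$ --- fails for the symmetric reason: your step 4 gives $u\ge m\ge x+3$, so $u$ is an isolated $X$, $c(u-1)=Y$, and $(u-1,u+1,u+3)$ reads $Y,Y,Z$. Thus in \emph{every} minimal counterexample both of your terminal rainbow witnesses fail simultaneously; the branch you defer is the generic one, not an exception.

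Independently of the endgame, the iteration itself is not well defined. The midpoint of $m$ and $v$ is only available when $m\equiv v\pmod 2$, and one halving step generally destroys that alignment (the parity of $(m+v)/2$ is not controlled); substituting $z$ or $z+1$ for $v$ as the right anchor can place the midpoint at or beyond $v$, where it may legitimately be colored $Z$ and yields no new $X$. So the ``steadily rightward'' walk has neither a guaranteed next step nor a guaranteed landing point of $v-3$. To close the proof you need a genuinely different mechanism for the final contradiction --- for instance, exploiting that in a minimal counterexample every interior $X$ and $Z$ is a singleton flanked by $Y$s and propagating the forced $Y$s from $v$ leftward and from $u$ rightward until they collide with a forced member of $\{X,Z\}$ --- rather than the two specific $\AP(3)$s you name. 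As written, the argument does not reach a contradiction.
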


\begin{lemma} \label{solitary}
Let $c$ be a coloring of $[n]$ in three colors with no rainbow
$\AP(3)$.  Then there is a solitary color.
\end{lemma}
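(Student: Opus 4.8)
The plan is to argue by contradiction. Suppose no color is solitary; then each of the patterns $\bR\bR$, $\bG\bG$, $\bB\bB$ occurs in $c$. Fix one occurrence of each. Since distinct colors cannot share a position, these three doubled pairs occupy pairwise disjoint pairs of consecutive integers and are therefore linearly ordered along $[n]$. Both relabelling the colors and the reversal $c\mapsto c(n+1-\cdot)$ preserve the hypothesis ``no rainbow $\AP(3)$'' as well as the property of a color being solitary, so I may relabel so that, read from left to right, the chosen pairs occur in the order $\bR\bR$, $\bG\bG$, $\bB\bB$, say at positions $\{a,a+1\}$, $\{b,b+1\}$, $\{e,e+1\}$ with $a+1<b$ and $b+1<e$.

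First I would apply Lemma~\ref{BBRR} to the $\bR\bR$ pair at $a$ and the $\bB\bB$ pair at $e$: this produces $w$ with $a<w<e$ and $\{c(w),c(w+1)\}=\{\bR,\bB\}$. Now comes the key local observation: if $c(w-1)=\bG$ then $(w-1,w,w+1)$ is a rainbow $\AP(3)$ of common difference $1$, and likewise $c(w+2)=\bG$ would make $(w,w+1,w+2)$ rainbow; hence $\bG$ avoids all of $w-1,w,w+1,w+2$. In particular the $\bG\bG$ pair $\{b,b+1\}$ is disjoint from the interval $[w-1,w+2]$, so it lies entirely to one side; reversing $c$ if necessary, I may assume $b+1\le w-2$, i.e.\ the nearest $\bG$ to the block $\{w,w+1\}$ sits at distance at least $2$ on its left. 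Let $t$ be the largest index $<w$ with $c(t)=\bG$; then $b+1\le t\le w-2$ and $c(t+1),\dots,c(w)$ all avoid $\bG$, while $c(w),c(w+1)$ are $\bR,\bB$ in some order.

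Finally I would extract a rainbow $\AP(3)$ from this configuration. Besides the $\bG$ at $t$ and the colors $\bR,\bB$ at $w,w+1$, a second application of Lemma~\ref{BBRR} — to the $\bG\bG$ pair together with the $\bR\bR$ pair, which lies strictly between $a$ and $b$ — yields an $\{\bR,\bG\}$-adjacency, hence an additional $\bR$ (and, via the difference-$1$ constraint again, a short stretch whose colors avoid $\bB$) located close to the $\bG$'s near position $b$. Among the resulting points of all three colors one then selects three in arithmetic progression: the natural candidate common differences are $1$, the gaps $w-t$ and $w+1-t$, and their reflections through $t$, $w$, or $w+1$. The main obstacle is exactly this last step of bookkeeping: there are a handful of local configurations to run through (the two orientations of the $\{\bR,\bB\}$-block, and the position of the $\bG\bG$ pair and of the auxiliary $\bR$ relative to it), and in each one must verify that some such progression has all three terms inside $[n]$ — it is precisely here that one needs all three doubled pairs together with the reversal symmetry, rather than only two of the pairs.
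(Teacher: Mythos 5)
Your setup is sound and matches the paper's opening moves: assume all three doubled pairs $\bR\bR$, $\bG\bG$, $\bB\bB$ occur, order them by symmetry, apply Lemma~\ref{BBRR} to the two outer pairs to get an $\{\bR,\bB\}$-adjacency at $\{w,w+1\}$, and observe that $\bG$ must avoid $w-1,\dots,w+2$. But the argument stops exactly where the work is: you concede that extracting the rainbow $\AP(3)$ is ``the main obstacle'' and offer only a list of candidate common differences with no argument that any of them is realized. Having nearby points of all three colors does not by itself produce three of them in arithmetic progression, and the second application of Lemma~\ref{BBRR} (to the $\bR\bR$ and $\bG\bG$ pairs) yields an $\{\bR,\bG\}$-adjacency located between $a$ and $b$, with no stated mechanism tying it to the $\{\bR,\bB\}$ block near $w$. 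This is a genuine gap, not bookkeeping.

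The missing idea is a minimality/forcing argument rather than a search for an explicit progression. The paper chooses the $\bR\bR$ and $\bB\bB$ pairs so that the interval strictly between them contains the $\bG\bG$ pair but no further $\bR\bR$ or $\bB\bB$, and then takes $w$ to be the $\{\bR,\bB\}$-adjacency \emph{closest to the $\bG\bG$ pair}, say $c(w)=\bR$, $c(w+1)=\bB$ with $w<y$ and $c(y)=c(y+1)=\bG$. Then $c(w+2)\neq\bG$ (else $\{w,w+1,w+2\}$ is rainbow; this also forces $w+2<y$), and $c(w+2)\neq\bR$ (else $w+1$ would be a closer $\{\bR,\bB\}$-adjacency), so $c(w+1)=c(w+2)=\bB$ --- a doubled $\bB$ inside the region chosen to contain none, a contradiction. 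You made the analogous extremal choice for the nearest $\bG$ (your $t$) but not for $w$ relative to the $\bG\bG$ pair, and without that choice the two-line local forcing that closes the proof is unavailable.
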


\begin{proof}
Assume the opposite. Let $c$ be a coloring of $[n]$ with colors
\bR, \bG, \bB~and such that each color appears on consecutive positions
somewhere in $[n]$. In particular, there are numbers $1\leq
x<y<z<n$ such that, without loss of generality, $c(x)=c(x+1)=\bR$,
$c(y)=c(y+1)=\bG$, and $c(z)=c(z+1)=\bB$, and such that
there are no two consecutive integers colored $\bB\bB$ or $\bR\bR$ in the interval
$[x+1, z]$.

By lemma~\ref{BBRR}, there is a $w$, with $x<w<z$, such that
($c(w)=\bR$ and $c(w+1)=\bB$) or ($c(w)=\bB$ and $c(w+1)=\bR$).
Assume without loss of generality that $x<w<y$ and that $w$ is
closest to $y$ satisfying this property, and $c(w)=\bR$,
$c(w+1)=\bB$. Note that $w+1<y-1$, otherwise $\{w,w+1,w+2\}$ will
be a rainbow $\AP(3)$. But now $c(w+2)=\bB$ otherwise we shall
contradict the choice of $w$. Therefore, we have
$c(w+1)=c(w+2)=\bB$, a contradiction.
\end{proof}

\begin{lemma} \label{neighbor_solitary}
Let $c$ be a coloring of $[n]$ in three colors \bR,\bG,\bB~with no
rainbow $\AP(3)$. Let color \bG ~be solitary.  Then, either the
neighbor set of \bG~is monochromatic or there are at most two
numbers $x,y$ with $c(x)=c(y)=\bG$.
\end{lemma}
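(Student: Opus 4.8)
I will argue by contradiction: assume $c$ has no rainbow $\AP(3)$, that $\bG$ is solitary, that at least three integers receive color $\bG$, and yet that the neighbor set of $\bG$ is not monochromatic; I will then exhibit a rainbow $\AP(3)$ or two consecutive integers of color $\bG$, both forbidden. The key local tool is a ``reflection'' principle: if $c(p)=\bG$ then in $(p-d,p,p+d)$, whenever both $p-d$ and $p+d$ avoid color $\bG$, one must have $c(p-d)=c(p+d)$, else this $\AP(3)$ is rainbow. Taking $d=1$ and using that $\bG$ is solitary, every interior $\bG$ is surrounded by $\bR\bG\bR$ or by $\bB\bG\bB$; more generally the neighbor set of any \emph{single} $\bG$ (one or two integers) is monochromatic, so I may call each $\bG$ \emph{red} or \emph{blue}, and the hypothesis forces at least one of each. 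A companion ``doubling'' principle is also useful: inside a $\bG$-$\bG$-interval between consecutive $\bG$s $p<q$, the progression $(p,p+d,p+2d)$ forces $c(p+2d)=c(p+d)$ whenever $p+2d<q$ (and symmetrically from the $q$-end); in particular $c(p+1)=c(p+2)$, equal to $p$'s surrounding color, as soon as $q-p\ge 3$, while if $q-p=2$ the two $\bG$s at $p$ and $q$ necessarily share their surrounding color.

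The heart of the argument is the case where some red $\bG$ at $x$ and some blue $\bG$ at $y$ have $x+y$ odd (both taken interior). Put $m_\pm=(x+y\pm 1)/2$, two consecutive integers strictly between $x$ and $y$. Since $c(x)=\bG$ and $c(y+1)=\bB$ the progression $(x,m_+,y+1)$ forces $c(m_+)\in\{\bG,\bB\}$, and since $c(x+1)=\bR$ and $c(y)=\bG$ the progression $(x+1,m_+,y)$ forces $c(m_+)\in\{\bR,\bG\}$; hence $c(m_+)=\bG$. Symmetrically $(x,m_-,y-1)$ and $(x-1,m_-,y)$ force $c(m_-)=\bG$. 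Thus $m_-$ and $m_+$ are consecutive integers both colored $\bG$, contradicting solitarity. If instead no red $\bG$ and blue $\bG$ have an odd sum, a short parity check shows that all red $\bG$s and all blue $\bG$s -- hence all $\bG$s -- lie in one residue class mod $2$, so every $\bG$-$\bG$-interval joining a red $\bG$ to a blue $\bG$ has even length at least $4$.

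This ``even, same-parity'' case is where I expect the real work. In a $\bG$-$\bG$-interval $[a,b]$ with $a$ red, $b$ blue and $b-a$ even, the doubling principle propagates forced colors inward from the $\bR$-end and, mirrored, from the $\bB$-end; wherever the two forced patterns overlap they must agree, and a position forced to be simultaneously $\bR$ and $\bB$ completes the proof. The difficulty is that for some interval lengths these two propagations need not overlap inside $[a,b]$ alone (the constraints internal to a single interval can be jointly consistent), so I would invoke the third $\bG$: with at least three $\bG$s, $[a,b]$ is adjacent to a further $\bG$-$\bG$-interval, and an $\AP(3)$ through $a$ or through $b$ reaching into that neighbouring interval, combined with the doubling pattern there, supplies the missing clash, after a finite case analysis on the colour of the third $\bG$ and the parities of the two gap-lengths. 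Endpoint $\bG$s (at $1$ or $n$) are handled by the same ideas: at most two $\bG$s are endpoints, so with at least three $\bG$s one can select interior $\bG$s of whatever colours the preceding steps need, or else propagate inward from an endpoint $\bG$ via the doubling principle until an interior $\bG$ of the desired colour appears. The single hardest point stays the even same-parity subcase -- showing that the ``doubled'' pattern forced from a red end is incompatible with the one forced from a blue end once a third $\bG$ is present.
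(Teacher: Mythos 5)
Your reduction and your treatment of the odd-parity case are sound, and in fact slightly slicker than the paper's: where you force $(x+y\pm1)/2$ to both be $\bG$ (contradicting solitarity), the paper instead takes $x<y$ to be \emph{consecutive} $\bG$s with different surrounding colors, so that the midpoints cannot be $\bG$ and the two $\AP(3)$s force $\bR$ and $\bB$ simultaneously. But the even, same-parity case is a genuine gap, and you say so yourself ("where I expect the real work", "I would invoke the third $\bG$ \dots after a finite case analysis"). What you have there is a plan, not a proof, and the plan as stated does not close. Within a single $\bG$-$\bG$-interval $[a,b]$ with $a$ red and $b$ blue, the two doubling propagations need not clash: for $b-a=14$, for instance, the pattern $\bG\bR\bR\bB\bR\bB\bB\,{*}\,\bR\bR\bB\bR\bB\bB\bG$ satisfies every internal constraint, so you must leave the interval. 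Moreover the interval length is unbounded, so "a finite case analysis on the parities of the two gap-lengths" cannot by itself exhaust the cases; you would need a uniform propagation argument, and you have not exhibited one. The endpoint fallback ("propagate inward \dots until an interior $\bG$ of the desired colour appears") is also not meaningful: propagation of forced colors does not produce new $\bG$s, and one can have, say, exactly three $\bG$s with the only red one sitting at position $1$.

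The missing idea, as the paper executes it, is to step just \emph{outside} the pair rather than into a whole neighbouring interval. With consecutive $\bG$s $x<y$, $c(x\pm1)=\bR$, $c(y\pm1)=\bB$, $x+y$ even, and (using the third $\bG$ only to guarantee, after reflection, that $y\le n-2$) one looks at $c(y+2)$. If $c(y+2)=\bB$, then $c(x+2)=\bR$ and the $\AP(3)$s $\{x+2,(x+y+2)/2,y\}$ and $\{x,(x+y+2)/2,y+2\}$ force the midpoint to be both $\bR$ and $\bB$. If $c(y+2)=\bG$, one takes the \emph{largest} $w$ with $x<w<y$ and $c(w)=c(w+1)=\bR$ (it exists since $c(x+1)=c(x+2)=\bR$), picks the parity so that $w+y$ is even, and the $\AP(3)$s $\{w,(w+y)/2,y\}$ and $\{w,(w+y)/2+1,y+2\}$ force a new $\bR\bR$ strictly to the right of $w$, contradicting maximality. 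This extremal-choice step is the ingredient your sketch lacks; without it (or an equivalent), the even case remains unproven.
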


\begin{proof}
Note first that if $c(x)=\bG$, for some $x\in\{2,\ldots,n-1\}$
then $c(x-1)=c(x+1)\in \{\bB,\bR\}$. Now, assume that there are
two integers $x,y$, $1\leq x<y\leq n$, such that $c(x)=c(y)=\bG$
but $c(z)\neq\bG$ for all $x<z<y$ and such that $c(x+1)=\bR$ and
$c(y-1)=\bB$. Assume that there are at least three integers
colored $\bG$. Then, it is easy to see that we may assume that
$x\geq 2$ or $y\leq n-2$. Let $y$  be at most $n-2$, without loss
of generality.

If $y+x$ is odd then $c((y+x+1)/2)=\bR$ and  $c((y+x+1)/2)=\bB$ which follows from
considering the $\AP(3)$ $\{x+1,(x+y+1)/2,y\}$ and $\{x,(x+y+1)/2,y+1\}$, respectively,  a contradiction.

If $y+x$ is even and $c(y+2)=\bB$, we have $c(x+2)=\bR$.  Then
$c((x+y+2)/2)=\bR$  and $c((x+y+2)/2)=\bB$  from the $\AP(3)$  $\{x+2,(x+y+2)/2,y\}$, and the $\AP(3)$ $\{x,(x+y+2)/2,y+2\}$,
 a contradiction.

If $y+x$ is even and $c(y+2)=\bG$, consider the largest $w$, $x<w<y$
such that $c(w)=c(w+1)=\bR$. Then one of $w+y$ and $w+1+y$ is even.
Assume, without loss of generality, that $w+y$ is even. Then
$(w+y)/2$ and $(w+y+2)/2$ will have to have color \bR~ because of
$\AP(3)$s $\{w,(w+y)/2,y\}$ and $\{w,(w+y+2)/2,y+2\}$, a
contradiction to maximality of $w$.
\end{proof}

\section{Lemmas specific to the main theorem}\label{lemmas_specific}

In all of the following lemmas we consider a coloring $c$ of $[n]$ in three colors
$\bR, \bG, \bB$ with a solitary color $\bG$ having all neighbors of color $\bR$.
We also assume that this coloring has two consecutive integers colored $\bB$.
The intervals $I_1, I_2, I_3$ are defined as in the proof of the theorem in section \ref{main_proof}.

\begin{lemma} \label{GRG}
~\\
\noindent
(a) If $x\in [1,n-1]$ and  $c(x), c(x+1) \in \{\bG, \bB\}$ then $c(x)=c(x+1)=\bB$.\\
(b) $[1,n]$ does not contain  $\bG\bR\bG$ \\
(c)  $[1,n]$  does not contain $\bG\bR\bR\bG$.\\
(d) If $x\in [1,n-2]$ and $c(x), c(x+2) \in \{\bG, \bB\}$ then $c(x)=c(x+2)=\bB$.\\
\end{lemma}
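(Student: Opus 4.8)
The plan is to prove (a) outright, to reduce (d) to (b), and to prove (b) and (c) by one common device: assuming the forbidden pattern occurs, combine the guaranteed occurrence of $\bB\bB$ with suitably chosen three-term progressions to produce a new $\bB\bB$ that is strictly closer to the pattern, and then reach a contradiction by starting from the $\bB\bB$ closest to the pattern. Part (a) is immediate: since every neighbor of $\bG$ is colored $\bR$, if $c(x)=\bG$ then $c(x+1)=\bR$ (when $x+1\le n$), and if $c(x+1)=\bG$ then $c(x)=\bR$; as $\bG$ is also solitary, the only way to have $c(x),c(x+1)\in\{\bG,\bB\}$ is $c(x)=c(x+1)=\bB$. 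For (d), assume $c(x),c(x+2)\in\{\bG,\bB\}$ and, without loss of generality, $c(x)=\bG$; then $c(x+1)=\bR$, so $\{x,x+1,x+2\}$ is an $\AP(3)$ which is rainbow if $c(x+2)=\bB$ and realizes $\bG\bR\bG$ if $c(x+2)=\bG$ — the latter forbidden by (b). So (d) follows once (b) is established, and we prove the parts in the order (a), (b), (d), (c).

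For (b), suppose $\bG\bR\bG$ occurs at $x,x+1,x+2$, so $\bG$ sits at both $x$ and $x+2$ and $c(x-1)=c(x+3)=\bR$ whenever these positions exist. The reflection $i\mapsto n+1-i$ preserves every standing hypothesis and sends this configuration to one of the same shape, so we may assume some $\bB\bB$ lies entirely to the right of this copy of $\bG\bR\bG$; let $[z,z+1]$ be the leftmost such. Now join the two copies of $\bG$ to the $\bB\bB$ by a pair of $\AP(3)$s with consecutive midpoints: if $x+z$ is even use $\{x,m,z\}$ and $\{x+2,m+1,z\}$ with $m=(x+z)/2$, and if $x+z$ is odd use $\{x,m,z+1\}$ and $\{x+2,m+1,z+1\}$ with $m=(x+z+1)/2$. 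Each progression has one endpoint colored $\bG$ and one colored $\bB$, so the no-rainbow hypothesis gives $c(m),c(m+1)\in\{\bG,\bB\}$, whence part (a) yields $c(m)=c(m+1)=\bB$. One checks that $m$ lies strictly between the pattern and $z$ (using the forced colors $c(x+2)=\bG$, $c(x+3)=\bR$ near the pattern to rule out the boundary values of $m$), so $[m,m+1]$ is a $\bB\bB$ lying to the right of $\bG\bR\bG$ and closer than $[z,z+1]$, contradicting the choice of $z$.

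Part (c) uses the same idea with one extra step. Suppose $\bG\bR\bR\bG$ occurs at $x,\dots,x+3$, so $\bG$ sits at $x$ and at $x+3$ with the intervening and flanking positions colored $\bR$; after possibly reflecting, let $[z,z+1]$ be the leftmost $\bB\bB$ lying entirely to the right of $x+3$. If $x+z$ is odd, the pair $\{x,m,z+1\}$ and $\{x+3,m+1,z\}$ has consecutive midpoints, so (a) again forces a $\bB\bB$ at $[m,m+1]$ strictly between the pattern and $z$, a contradiction. If $x+z$ is even, the midpoints of $\{x,\cdot,z\}$ and $\{x+3,\cdot,z+1\}$ are $p=(x+z)/2$ and $p+2$, both forced into $\{\bG,\bB\}$, so (d) gives $c(p)=c(p+2)=\bB$, while $\{x+1,\cdot,z+1\}$ and $\{x+2,\cdot,z\}$ force $c(p+1)\in\{\bR,\bB\}$. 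If $c(p+1)=\bB$ we again have a $\bB\bB$ between the pattern and $z$, done; if $c(p+1)=\bR$, then $\bB$ occupies the two positions $p$ and $p+2$ (distance $2$), and one further midpoint step — joining a single copy of $\bG$ (whichever of $x,x+3$ makes the midpoints integral) to both of these $\bB$'s, which again gives consecutive midpoints — produces via (a) a genuine $\bB\bB$ strictly between the pattern and $z$, the final contradiction.

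The part I expect to be fiddliest is everything hidden behind ``$m$ lies strictly between the pattern and $z$'' and its analogues: verifying that every invoked $\AP(3)$ actually stays inside $[1,n]$ — which is precisely what the reflection normalization and the ``closest $\bB\bB$'' choice are for — and keeping the parities straight so that each midpoint is an integer. The one genuinely new difficulty is in (c): because the two $\bG$'s sit at the odd distance $3$, a single midpoint step need not yield two adjacent $\bB$'s; the extra step, fed by the distance-$2$ pair of $\bB$'s that (d) supplies, is exactly what patches this. I believe the resulting case analysis is entirely routine.
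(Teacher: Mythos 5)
Your proof is correct and follows essentially the same strategy as the paper's: (a) from solitariness plus the $\bR$-neighborhood of $\bG$, (d) reduced to (b), and (b), (c) by pairing the two $\bG$'s of the forbidden pattern with the nearest $\bB\bB$ through $\AP(3)$s whose midpoints are consecutive, then using (a) to manufacture a strictly closer $\bB\bB$. The one place you genuinely diverge is the even-parity sub-case of (c), and there your version is actually the more complete one: the paper cites the progressions $\{w,(w+y)/2,y\}$ and $\{w+2,(w+y+2)/2,y\}$ and asserts both midpoints are $\bB$, but the second of these has endpoints colored $\bR$ and $\bB$, so it only excludes $\bG$ at its midpoint and does not by itself rule out the configuration $\bG\bR$ or $\bB\bR$ at the two midpoints. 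Your route --- anchoring at the two $\bG$'s to force $\bB$ at $p$ and $p+2$ via (d) (legitimate, since (d) depends only on (b)), and then, when $c(p+1)=\bR$, taking one further midpoint step from a single $\bG$ to the pair $p,p+2$ to produce an honest adjacent $\bB\bB$ --- supplies exactly the detail the paper's terse argument elides. The boundary verifications you flag as fiddly do all come out: whenever a forced-$\bB$ midpoint would land on a pattern position, that position carries a forced color $\bR$ or $\bG$ and yields an immediate contradiction, and otherwise the new $\bB\bB$ sits strictly between the pattern and $z$, contradicting the choice of $z$.
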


\begin{proof}
~

(a)  Note that having $c(x)=c(x+1)=\bG$ is impossible since $\bG$
is a solitary color. Having exactly one integer $x$ or $x+1$ of
color $\bG$ and another of color $\bB$ is impossible since the
neighbors of $\bG$ are colored with $\bR$.

(b)  Without loss of generality, we may assume that there are integers $w,y\in [n]$,
   $y>w$ and  such that  $w,w+1,w+2$ is
   colored \bG\bR\bG~ and $y$  is the least integer such that
   $c(y)=c(y+1)=\bB.$  If $y$ has the same parity as $w$ then  the $\AP(3)$
   $\{w, (w+y)/2, y\}$ and $\{w+2, (w+2+y)/2, y\}$ imply that
   $c((w+y)/2)=c((w+2+y)/2)=\bB$.
    If $y+1$ has the same parity as $w$ then  the $\AP(3)$
   $\{w, (w+y+1)/2, y+1\}$ and $\{w+2, (w+2+y+1)/2, y+1\}$ imply that
   $c((w+y+1)/2)=c((w+2+y+1)/2)=\bB$.
   This is a contradiction to the minimality of $y$.

(c)  Without loss of generality, we may assume that there are integers $y,w \in [n]$
such that  $w, w+1, w+2, w+3$ is colored $\bG\bR\bR\bG$
and that $y$ is the least integer such that $c(y)=c(y+1)=\bB$.
If $w+y$ is even, then consider the following $\AP(3)$s:
$\{w, (w+y)/2, y\}$ and $\{w+2, (w+2+y)/2, y\}$. It follows that
$c((w+y)/2)=c((w+y+2)/2)=\bB$. Since $y>(w+y)/2>w $, we have a contradiction to the minimality of $y$.
If $w+y$ is odd, the consider the following $\AP(3)$s:
$\{w, (w+y+1)/2, y+1\}$ and $\{w+3, (w+3+y)/2, y\}$. It follows that
$c((w+y+1)/2)=c((w+y+3)/2)=\bB$. Since $y>(w+y+1)/2>w $, we have
a contradiction to the minimality of $y$.

(d) Note that $c(x)=c(x+2)=\bG$ is impossible  because of b).
If $\{c(x), c(x+2)\}= \{\bB, \bG\}$ then, since
$c(x+1)=\bR$, $\{x,x+1,x+2\}$ is a rainbow $\AP(3)$.
\end{proof}

\begin{lemma} \label{BB}
Let $x<y$, $c(x)=c(y)=\bG$ and both intervals $[1,x]$ and $[y,n]$ contain  $\bB\bB$s.
Then $[x,y]$ contains $\bB\bB$.
\end{lemma}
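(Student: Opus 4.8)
\textbf{Proof plan for Lemma \ref{BB}.}
Suppose toward a contradiction that $[x,y]$ contains no $\bB\bB$. We have two positions $x < y$ with $c(x) = c(y) = \bG$, and $\bB\bB$s occurring both in $[1,x]$ and in $[y,n]$. The idea is to apply Lemma \ref{BBRR} twice, once on the left and once on the right, and then use the resulting $\bR$-$\bB$ adjacencies together with the fact that all neighbors of $\bG$ are colored $\bR$ to build a rainbow $\AP(3)$ centered at or near $x$ or $y$, exactly as in the proofs of Lemma \ref{solitary} and Lemma \ref{GRG}(b).

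First I would locate a $\bB\bB$ in $[1,x]$, say at positions $p, p+1$ with $p+1 \le x$, and a $\bB\bB$ in $[y,n]$, say at positions $q, q+1$ with $q \ge y$. Since $c(x) = c(y) = \bG$ and $\bG$ is solitary with $\bR$-neighbors, the positions $x-1, x+1, y-1, y+1$ are all colored $\bR$, giving $\bR\bR$-type configurations straddling $x$ and $y$ — more precisely, I would first check whether $[x,y]$ contains two consecutive $\bR$s; it does, since $c(x+1) = c(x+2) = \bR$ would follow unless $c(x+2) \in \{\bG,\bB\}$, and Lemma \ref{GRG}(d) (applied with the position $x$) forbids $c(x+2) = \bG$, while $c(x+2) = \bB$ together with $c(x+1) = \bR$, $c(x) = \bG$ is not yet a contradiction, so this needs a little care. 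The cleaner route: apply Lemma \ref{BBRR} with $X = \bB$ (at the $\bB\bB$ in $[1,x]$) and the $\bR\bR$ obtained just to the right of $x$ if one exists, or symmetrically; in any case we obtain some $w$ with $c(w), c(w+1)$ being $\bR$ and $\bB$ in some order, lying strictly between the left $\bB\bB$ and the position in $[x,y]$, hence with $w$ or $w+1$ close to $x$.

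The heart of the argument is then the parity/averaging trick used throughout Section \ref{lemmas_specific}: having a $\bB\bB$ strictly on one side of a $\bG$ at position $x$ and an $\bR\bB$ or $\bR\bR$ adjacency strictly on the other side (inside $[x,y]$, which contains no $\bB\bB$), I form an $\AP(3)$ with center $x$ (or center at one of the nearby $\bR$-neighbors of $x$) whose two endpoints are forced to receive the two colors $\bR$ and $\bB$ by two different $\AP(3)$ relations — contradicting ``no rainbow $\AP(3)$'', or else propagating a $\bB\bB$ into $[x,y]$ and contradicting our assumption. I would run the two parity cases ($w + x$ even versus odd, equivalently using $x$ or an adjacent $\bR$) exactly as in Lemma \ref{neighbor_solitary} and Lemma \ref{GRG}(b), choosing $w$ extremal (closest to $x$, or the last $\bR\bR$ before $x$) so that any inferred $\bB$ at an intermediate averaged position either lands strictly inside $[x,y]$ as part of a $\bB\bB$ or contradicts the extremality of the chosen adjacency. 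The main obstacle is the bookkeeping of which of the finitely many adjacency/parity subcases actually forces a $\bB\bB$ \emph{inside} $[x,y]$ rather than merely producing an isolated $\bB$; I expect that in each subcase one gets two averaged positions differing by $1$ (as in parts (b) and (c) of Lemma \ref{GRG}) both forced to $\bB$, which is precisely the $\bB\bB$ we want, and that handling the boundary positions $x$ and $y$ themselves (where the center of the $\AP(3)$ might coincide with a $\bG$) is what requires the extra hypothesis that both $[1,x]$ and $[y,n]$ — and not just $[1,n]$ — contain $\bB\bB$s.
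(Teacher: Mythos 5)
Your plan contains the right germ --- ``propagating a $\bB\bB$ into $[x,y]$'' through the $\bG$ at $x$ --- but the machinery you wrap around it is partly wrong and the decisive case is missing. The clean move, which is the paper's entire proof, is a \emph{reflection}, not an averaging: let $w$ be the largest integer with $w<x$ and $c(w)=c(w+1)=\bB$, and consider the two $\AP(3)$s $\{w,x,2x-w\}$ and $\{w+1,x,2x-w-1\}$. Since $c(x)=\bG$ and $c(w)=c(w+1)=\bB$, both $2x-w-1$ and $2x-w$ are forced into $\{\bB,\bG\}$, and Lemma \ref{GRG}(a) upgrades the pair to $\bB\bB$. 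No parity split and no appeal to Lemma \ref{BBRR} or to $\bR\bB$ adjacencies is needed; indeed the midpoint/``averaged positions'' version you lean on cannot produce two \emph{adjacent} forced positions, because the midpoints of $\{w,\cdot,x\}$ and $\{w+1,\cdot,x\}$ differ by $1/2$, so the parity casework you anticipate does not assemble into the $\bB\bB$ you want.

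The genuine gap is the overshoot case: the reflected pair $2x-w-1,\,2x-w$ need not land inside $(x,y)$; it may land beyond $y$. Your proposal's fallback --- that an inferred $\bB$ ``contradicts the extremality of the chosen adjacency'' --- cannot work here, since the reflection of the left $\bB\bB$ lands on the \emph{right} of $x$ and so says nothing against the maximality of $w$. This is exactly where the hypothesis that $[y,n]$ also contains a $\bB\bB$ enters: take $z$ minimal with $z>y$ and $c(z-1)=c(z)=\bB$, and assume without loss of generality that $x-w\leq z-y$ (otherwise reflect through $y$ instead). Then $2x-w-1-y<z-y$ (using $x<y$), so an overshooting reflected $\bB\bB$ sits strictly between $y$ and $z-1$, contradicting the minimality of $z$. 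Without setting up the symmetric extremal choices on both sides and the comparison $x-w\leq z-y$, the argument does not close.
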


\begin{proof}
   Let $w$ be the largest number such that $w<x$ and  $c(w)=c(w+1)=\bB$.
   Let $z$ be the smallest number such that $z>y$ and $c(z)=c(z-1)=\bB$.
 Assume without loss of generality that $x-w\leq z-y$.
   By considering the $\AP(3)$s $\{w,x,2x-w\}$ and
   $\{w+1,x,2x-w-1\}$, we have  that $c(2x-w-1), c(2x-w) \in \{\bB, \bG\}$,
and using lemma \ref{GRG} a), we have  $c(2x-w-1)=c(2x-w)=\bB$.
   If    $x<2x-w-1<y$, then we are done. Otherwise, $2x-w-1 >y$ and
   $2x-w-1 - y < z-y$, a contradiction to the choice of $z$.
\end{proof}

\begin{table}[h]\label{t}

$$ \begin{array}{c|rrrrrrrrrrrrrrrrrrrrr} \hline
    {\rm Interval} &&&&&&&&&&&&&&&&&&&&&\\
 {\rm Length} &\multicolumn{5}{l}{\rm  Coloring} &&&&&&&&&&&&&&&\\
\hline
      6   & \bG & \bR & \bR & \bB & \bR & \bR &  & & & &
      & & & & & & &  & & &\\ \hline
      9   & \bG & \bR & \bR & \bB & \bR & \bR & \bB & \bR &
      \bR &  & & & & & & & & & & & \\ \hline
      10  & \bG & \bR & \bR & \bR & \bR & \bB & \bR & \bR &
      \bR & \bR &  & & & & & & &  & & &\\ \hline
      12  & \bG & \bR & \bR & \bB & \bR & \bR & \bB & \bR &
      \bR & \bB & \bR & \bR &  & & & &  & & & \\ \hline
      14  & \bG & \bR & \bR & \bR & \bR & \bR & \bR & \bB &
      \bR & \bR & \bR & \bR & \bR & \bR & & & & & & &\\ \hline
      15 & \bG& \bR& \bR& x&\bR& y& x& \bR& \bR& x& y& \bR& x& \bR& \bR&&& & & &\\ \hline
      17 & \bG & \bR & \bR & \bB& \bR& \bB& \bB& \bB& \bR& \bR & \bB& \bB& \bB& \bR & \bB & \bR& \bR & & & &\\  \hline
18 & \bG & \bR & \bR & \bB& \bR& \bR& \bB& \bR& \bR& \bB & \bR& \bR& \bB& \bR & \bR & \bB& \bR & \bR&  & &\\  \hline
20  & \bG & \bR & \bR & \bR& \bR& \bB& \bR& \bR& \bR& \bR & \bB& \bR& \bR& \bR & \bR & \bB& \bR & \bR& \bR& \bR &\\  \hline
21 &\bG & \bR & \bR & x& \bR& \bR& x& y& \bR& x & \bR& \bR& x& \bR & y & x& \bR &\bR & x&\bR &\bR\\
\hline \end{array} $$
\caption{Colorings of $\bG$-$\bG$ intervals of lengths at most $21$ containing $\bB$. Here, $x,y\in \{\bR, \bB\}$. }
\end{table}

\begin{lemma} \label{up-to-15pattern}
Let $I$ be a $\bG$-$\bG$ interval with at least one $\bB$. Then
for each such   $I$ of length at most $21$, $I$ must be colored as
in table $1$.
\end{lemma}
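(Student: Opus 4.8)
The plan is to analyze a $\bG$-$\bG$ interval $I = [x, x+i]$ (so $c(x) = c(x+i+1) = \bG$, no $\bG$ strictly inside) containing at least one $\bB$, under the standing hypotheses: $\bG$ is solitary, all neighbors of $\bG$ are $\bR$, and $[n]$ contains a $\bB\bB$. The first observation is structural: since $\bG$ is solitary and its neighbors are $\bR$, the interior of $I$ is colored only with $\bR$ and $\bB$, and moreover $c(x+1) = c(x+i) = \bR$. By Lemma \ref{GRG}(a) and (d), inside $I$ any two $\bB$'s that are adjacent or at distance two are forced to certain positions relative to the endpoints; and Lemma \ref{GRG}(b),(c) are irrelevant here since there is no interior $\bG$. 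The real engine is the $\AP(3)$ condition applied with the two $\bG$ endpoints and a $\bB\bB$ somewhere in $[n]$: using the proof technique of Lemma \ref{BB}, any $\bB\bB$ occurring near $I$ (or reflected into $I$ via midpoints with $x$ or $x+i+1$) forces $\bB\bB$'s inside $I$ at predictable places, and conversely the absence of certain $\bB$'s is forced.

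The core of the argument is a finite case analysis on the length $i+1 \in \{?, \ldots, 21\}$ combined with the position of the first (and then each) $\bB$ inside $I$. First I would establish that $I$ cannot be "too short": for small lengths, placing a single $\bB$ together with the two surrounding $\bG$'s and a $\bB\bB$ from $[n]$ produces a rainbow $\AP(3)$ directly (three-term progressions through the $\bB$ and one of the $\bG$'s land on $\bR$-colored positions creating $\bG\bR\bB$ type rainbow triples, or forbidden $\bG\bR\bG$/$\bG\bR\bR\bG$ patterns from Lemma \ref{GRG}). This rules out every length not appearing in Table 1 below the listed ones and pins down, for each admissible length, that the $\bB$'s sit in exactly the pattern shown. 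The key mechanism repeated throughout: if $c(p) = \bB$ with $x < p < x+i+1$, then considering the $\AP(3)$'s $\{x, (x+p)/2 \text{ or } (x+p+1)/2, p\}$ and the mirrored ones anchored at $x+i+1$ forces companion $\bB$'s, and iterating this "folding" either fills the interval in the Table-1 pattern or collides with a $\bG$, $\bR$-neighbor, or minimality-of-$\bB\bB$ constraint to yield a contradiction. For the lengths with free symbols $x, y \in \{\bR,\bB\}$ (namely $15$ and $21$), I would check that those positions are genuinely unconstrained by showing both assignments avoid rainbow $\AP(3)$s, while all other interior positions are rigidly determined.

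The main obstacle I anticipate is organizing the case analysis so it is genuinely exhaustive without ballooning: there are nine lengths to pin down, and for several of them one must argue that a $\bB$ in any "wrong" position is immediately refuted, which requires carefully tracking parities (since the midpoint arguments split on whether $x + p$ is even or odd) and keeping the interaction with the external $\bB\bB$ — whose location we do not control — under control via the minimality trick of Lemma \ref{BB}. I would handle this by first proving a clean auxiliary claim: \emph{inside any $\bG$-$\bG$ interval of length $\le 21$ with a $\bB$, the set of $\bB$-positions is symmetric about the center of the interval and every maximal run of $\bR$'s has length $\le$ (some small bound depending on whether the flanking symbols are $\bB$ or $\bG$)}; this symmetry-plus-gap-bound reduces each length to only a couple of candidate patterns, which are then matched against Table 1 or eliminated. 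With that claim in hand, the remaining verification is the routine finite check summarized by the table.
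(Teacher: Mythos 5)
Your core mechanism --- $\AP(3)$s anchored at the two $\bG$ endpoints of $I$ forcing color equalities inside --- is the right one and is what the paper uses, but as written your plan has two genuine problems. First, the ``folding'' step is stated incorrectly: $\{x,(x+p+1)/2,p\}$ is not an arithmetic progression, so when $p-x$ is odd your midpoint argument does not apply and you have no stated mechanism for constraining odd-offset positions. The clean fix (and the paper's actual argument) is to use each interior position as the \emph{middle} term of an $\AP(3)$ whose outer terms include an endpoint: writing $I=[0,k-1]$ with $c(0)=c(k)=\bG$, the progressions $\{0,y,2y\}$ and $\{2y-k,y,k\}$ give $c(y)=c(2y)$ for $y<k/2$ and $c(y)=c(2y-k)$ for $y>k/2$, with no parity issues; together with $c(1)=c(k-1)=\bR$ these doubling relations group the interior positions into orbits, each forced to be $\bR$ (if it meets $1$ or $k-1$) or genuinely free, and the finite check for $k\le 21$ yields exactly Table 1. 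Note that the external $\bB\bB$, which you call ``the real engine,'' is never needed here: the lemma's hypothesis is only that $I$ contains a $\bB$, and importing the $\bB\bB$ (with its uncontrolled location, via the technique of Lemma \ref{BB}) is precisely what creates the ``main obstacle'' you describe.

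Second, your proposed auxiliary claim --- that the $\bB$-positions are symmetric about the center of $I$, plus a run-length bound --- cannot be established up front by a short argument: the symmetry is a feature of the final patterns, not a direct consequence of any single $\AP(3)$. For example, in the length-$15$ interval the doubling orbits are $\{1,2,4,8\}$ and $\{7,11,13,14\}$; positions $7$ and $8$ are not linked to each other by any endpoint-anchored progression, and each is $\bR$ for a different reason ($8$ because its orbit contains $1$, $7$ because its orbit contains $k-1=14$). Proving the symmetry would require the same orbit analysis as the lemma itself, so it is not a usable shortcut. Finally, a length absent from Table 1 is not ``ruled out'' as a $\bG$-$\bG$ interval; the doubling relations merely force its whole interior to be $\bR$, so such an interval simply cannot contain a $\bB$.
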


\begin{proof}
Let $I=[0, k-1]$; i.e.,  $c(0)=c(k)=\bG$.  Because there is no
rainbow $\AP(3)$, we must have that $c(x)=c(2x)$ for all $x<k/2$
and $c(2x-k)=c(x)$ for all $x>n/2$.  Since the neighbor set of
\bG~is \bR, $c(1)=c(k-1)=\bR$. With these conditions we can
exhibit all possible colorings of $I$. The ones with at least one
$\bB$ are listed in table $1$, for $1 \leq k\leq 21$.
 \end{proof}

Now we present the main structural lemma.

\begin{lemma} \label{repeated_pattern}
Let $[x, x+14]$  be a $\bG$-$\bG$ interval containing $\bB\bB$. Then
\begin{equation} \nonumber
c(z)=
\begin{cases}
\bR, & {\rm if } \quad (z-x) \equiv  \pm 1,  \pm 2, \pm 4, \pm 7 \pmod {15},\\
\bB, & {\rm if} \quad (z-x) \equiv  \pm 3, \pm 5, \pm 6 \pmod {15}.
\end{cases}
\end{equation}
\end{lemma}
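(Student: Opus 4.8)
The plan is to show that once a $\bG$-$\bG$ interval $[x,x+14]$ is known to contain $\bB\bB$, its coloring is forced uniquely, and that the forced pattern is the one displayed. By Lemma~\ref{up-to-15pattern}, any $\bG$-$\bG$ interval of length $15$ containing a $\bB$ must be colored as in the length-$15$ row of Table~$1$, i.e.\ it looks like $\bG\bR\bR\, x\, \bR\, y\, x\, \bR\bR\, x\, y\, \bR\, x\, \bR\bR$ with $x,y\in\{\bR,\bB\}$, where the positions labelled $x$ are $(z-x)\equiv \pm 3\pmod{15}$ together with $(z-x)\equiv \pm 6\pmod{15}$, and the position labelled $y$ is $(z-x)\equiv \pm 5\pmod{15}$ — matching the claimed $\bB$-positions exactly when $x=y=\bB$. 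So it suffices to rule out $x=\bR$ and to rule out $y=\bR$ (given $x=\bB$), using the hypothesis that $\bB\bB$ actually occurs in the interval.

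First I would handle the case $x=\bR$. If $x=\bR$ then positions $(z-x)\equiv \pm 3,\pm 6\pmod{15}$ are all $\bR$, so the only possible $\bB$ inside the interval is at $y$, i.e.\ at $(z-x)\equiv \pm 5\pmod{15}$; but a single isolated $\bB$ (or two $\bB$'s that are not adjacent, since $+5$ and $-5$ are far apart) cannot produce a $\bB\bB$, contradicting the hypothesis. Hence $x=\bB$. Now with $x=\bB$ the interval reads, in relative coordinates, $\bG\bR\bR\bB\bR\, y\,\bB\bR\bR\bB\, y\,\bR\bB\bR\bR$, and the only place a $\bB\bB$ could possibly occur is adjacent to position with label $y$, namely between positions corresponding to $(z-x)\equiv 5,6$ or $(z-x)\equiv -6,-5$ (relative indices $5,6$ and $9,10$ out of $0,\dots,14$). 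If $y=\bR$, then there is no pair of adjacent $\bB$'s anywhere in $[x,x+14]$, again contradicting the hypothesis. Therefore $y=\bB$ as well, which is precisely the asserted coloring: $\bB$ at $(z-x)\equiv \pm 3,\pm 5,\pm 6\pmod{15}$ and $\bR$ at $(z-x)\equiv \pm 1,\pm 2,\pm 4,\pm 7\pmod{15}$ (position $0$ and $15$ being the $\bG$ endpoints, which here are the interior $\bG$'s of the larger coloring at $z-x\equiv 0$).

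I would write the congruence bookkeeping carefully: the doubling relations $c(z)=c(2z-x)$ forced by the absence of rainbow $\AP(3)$ through an endpoint $\bG$ (used already in the proof of Lemma~\ref{up-to-15pattern}) are what glue the relative residues $\pm 1\!\to\!\pm 2\!\to\!\pm 4\!\to\!\mp 7$ and $\pm 3\!\to\!\pm 6\!\to\!\mp 5$ into the two orbits, so that $x$ controls the orbit $\{\pm 3,\pm 6\}$ and $y$ the orbit $\{\pm 5\}$ — and these two orbits are in fact a single orbit $\{\pm3,\pm5,\pm6\}$ under doubling mod $15$, which forces $x=y$ once either equals $\bB$; this gives an even cleaner argument, since the $\bB$-orbit $\{\pm3,\pm5,\pm6\}$ must be monochromatic. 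The main (really the only) obstacle is just making sure the residue arithmetic mod $15$ is stated correctly and that ``$\bB\bB$ occurs'' genuinely forces the orbit $\{\pm3,\pm5,\pm6\}$ to be $\bB$ rather than $\bR$: one checks that in the all-$\bR$-on-that-orbit coloring there is literally no adjacent $\bB$-pair inside the interval, so the hypothesis is violated. Everything else is bounded finite case-checking already subsumed by Table~$1$. \qed
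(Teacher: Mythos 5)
There is a genuine gap: you have proved only the restriction of the lemma to the interval $[x,x+14]$ itself, but the statement (as it is used in the proof of Theorem~\ref{main}, where it yields $|\bR|\geq 8(n-1)/15-1$) asserts the congruence pattern for \emph{every} $z\in[n]$ --- e.g.\ it claims $c(x+16)=\bR$ and $c(x-3)=\bB$, which your argument never touches. The paper's proof spends almost all of its effort on exactly this propagation step: it takes a maximal interval $A$ on which the pattern holds, first shows that no position $15k+i$ with $i\not\equiv 0\pmod{15}$ can be colored $\bG$ (using the already-established local structure and a few explicit $\AP(3)$s), and then runs four parity cases on $k$ and $i$, each using an $\AP(3)$ anchored at one of the two $\bG$'s at relative positions $0$ and $15$ together with the midpoint relation $c((15k+i)/2)=c(i)$, to force $c(15k+i)=c(i)$ and contradict the maximality of $A$. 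None of this appears in your write-up, and it is not ``subsumed by Table~1,'' which only constrains $\bG$-$\bG$ intervals of bounded length.

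Your within-interval argument is fine and in fact fills in a detail the paper states without comment: that the $\bB\bB$ hypothesis forces both free letters $x$ and $y$ of the length-$15$ row to equal $\bB$, since the only adjacent pairs among the undetermined positions are $(5,6)$ and $(9,10)$, each pairing an $x$-slot with a $y$-slot. But your proposed ``cleaner'' variant is wrong: $\{\pm3,\pm6\}=\{3,6,9,12\}$ and $\{\pm5\}=\{5,10\}$ are \emph{two distinct} orbits under doubling mod $15$ (since $2\cdot 9\equiv 3$ and $2\cdot 10\equiv 5$), which is precisely why the table needs two independent letters and why the $\bB\bB$ hypothesis is genuinely needed to tie them together. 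Keep the adjacency argument, discard the single-orbit claim, and then supply the periodic extension to all of $[n]$.
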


\begin{proof}
To simplify our calculations, we shift the indices so that considered $\bG$-$\bG$ interval
is $[0,14]$ and the whole interval being colored is $[1-x, n-x]$.
The lemma \ref{up-to-15pattern} shows that the coloring of
$[0,15]$ must be as follows:
   \begin{center}
      \bG\bR\bR\bB\bR\bB\bB\bR\bR\bB\bB\bR\bB\bR\bR\bG.
   \end{center}
In particular, we have that
\begin{equation}\label{i-2i}
c(2i)=c(i),\quad  c(2i-1)=c(7+i), \quad i\in\{1,\ldots, 7\}.
\end{equation}

Let $A=[-w+1,z-1]$ be the largest interval having  the coloring $c$ as in the statement of the lemma.
I.e., for each $y\in A$

\begin{equation}
 c(y)=
\begin{cases}
                                 \bR, & {\rm  if}\quad  y\equiv \pm 1,\pm 2,\pm 4,\pm 7\pmod{15}, \\
                                 \bB, & {\rm  if} \quad y\equiv  \pm 3,\pm 5,\pm 6\pmod{15}
\end{cases}\nonumber
\end{equation}

Let $ z= 15k + i$, $0< i<15$.  If $z\leq n-x$, we shall show that $z$ must be colored as in $c$, thus contradicting the
maximality of $[-w+1,z-1]$. By symmetry, it will be the case  that if $-w\geq  1-x$ then
$-w$ must be colored as in $c$, again contradicting the maximality of $[-w+1, z-1]$.
Therefore we shall conclude that $A= [-w+1, z-1]= [1-x, n-x]$.

First we show that $c(z)\neq \bG$ if $i\neq 0$. Assume that $c(z)=c(15k+i)=\bG$.
If $i\in \{4, 5, 6,$ $7,8,  10, 11,12, 13,14\}$
then either $c(z-1)=\bB$ or ($c(z-2)=\bB$ and $c(z-1)=\bR$).
We arrive at a contradiction since the neighbors of $\bG$ are colored
$\bR$ and we can not have three consecutive numbers colored $\bB\bR\bG$.
For $i\in \{1,2,3,9\}$ we consider the following $\AP(3)$s: $\{15k-3, 15k-1, 15k+1\}$,
 $\{15k-6, 15k-2, 15k+2\}$,    $\{15k-5, 15k-1, 15k+3\}$,  $\{15k+5, 15k+7, 15k+9\}$.
Note that the first two terms in each of these four $\AP(3)$s have distinct  colors from the set $\{\bR, \bB\}$,
thus the last terms can not be colored with $\bG$.

Next we show that $c(15k+i)=c(i)$.\\

{\bf Case 1.} $k$ is even, $i$ is even. \\
Consider  $\AP(3)$ $\{0, (15k+i)/2, 15k+i\}$. Since
 $c((15k+i)/2)= c(15(k/2) + i/2) = c(15(k/2) + i) = c(i),$
we have that $c(15k+i)=c(i)$.

{\bf Case 2.} $k$ is odd, $i$ is odd. \\
Consider  $\AP(3)$ $\{0, (15k+i)/2, 15k+i\}$. Since
 $c((15k+i)/2)= c(15((k-1)/2) + (15+i)/2) = c(15((k-1)/2) + 15+i) = c(i),$
we have that $c(15k+i)=c(i)$.

 {\bf Case 3.} $k$ is odd, $i$ is even. \\
Consider  $\AP(3)$ $\{15, (15(k+1)+i)/2, 15k+i\}$. Since
 $c((15(k+1)+i)/2)= c(15((k+1)/2) + i/2) = c(15((k+1)/2) +i) = c(i),$
we have that $c(15k+i)=c(i)$.

 {\bf Case 4.} $k$ is even, $i$ is odd. \\
Consider  $\AP(3)$ $\{15, (15k+i+15)/2, 15k+i\}$. Since
 $c((15k+i+15)/2)= c(15(k/2) + (i+15)/2) = c(15(k/2) +(i+15)) = c(i+15)=c(i),$
we have that $c(15k+i)=c(i)$.
\end{proof}

\begin{lemma} \label{17}
Let $[x, x+16]$  be a $\bG$-$\bG$ interval. Then
\begin{equation} \nonumber
c(z)=
\begin{cases}
\bR, & {\rm if } \quad (z-x) \equiv  \pm 1,  \pm 2, \pm 4, \pm 8 \pmod {17},\\
\bB, & {\rm if} \quad (z-x) \equiv \pm 3, \pm 5, \pm 6, \pm 7 \pmod {17}.
\end{cases}
\end{equation}
\end{lemma}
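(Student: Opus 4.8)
The plan is to mirror the structure of Lemma \ref{repeated_pattern}, replacing the modulus $15$ by $17$ throughout and using the fact that this $\bG$-$\bG$ interval has length $17$ rather than $15$. First I would shift indices so that the given interval becomes $[0,16]$ with $c(0)=c(16+1)=c(17)=\bG$, and the whole ground set becomes $[1-x,n-x]$. By Lemma \ref{up-to-15pattern} (which covers all lengths up to $21$, in particular $17$), the coloring of $[0,17]$ must be the length-$17$ row of Table $1$; after re-indexing so the progression starts at $0$ this is exactly the claimed residue pattern on $\{0,1,\ldots,17\}$. The key multiplicative identities I would extract from the ``no rainbow $\AP(3)$'' condition are $c(2i)=c(i)$ for $i<17/2$ and $c(2i-17)=c(i)$ for $i>17/2$, analogous to (\ref{i-2i}); combined with the neighbor-set condition $c(1)=c(16)=\bR$, these pin down the listed coloring.

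The main body of the argument is the same bootstrapping/maximality device as in Lemma \ref{repeated_pattern}. Let $A=[-w+1,z-1]$ be the largest interval on which $c$ agrees with the claimed $17$-periodic pattern. Writing $z=17k+i$ with $0<i<17$, I would show that if $z\le n-x$ then $c(z)$ is forced to equal the pattern value, contradicting maximality; by the left-right symmetry of the pattern the same argument at $-w$ handles the left endpoint, so $A$ must be all of $[1-x,n-x]$. There are two things to check for each $z$: that $c(z)\ne\bG$, and that $c(z)=c(i)$. For $c(z)\ne\bG$: since $\bG$ is solitary with all neighbors $\bR$, we cannot have $\bB\bR\bG$ consecutively, so residues $i$ whose predecessor (or whose $\bR$-predecessor has a $\bB$ before it) rule out $\bG$ immediately; the remaining ``bad'' residues $i$ are handled by exhibiting a short $\AP(3)$ inside $A$ whose first two terms already have the two colors $\bR$ and $\bB$, forcing the third term $z$ to be one of $\{\bR,\bB\}$. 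For $c(z)=c(i)$: split into the four parity cases for $(k,i)$ exactly as in Lemma \ref{repeated_pattern}, in each case choosing the $\AP(3)$ $\{0,(17k+i)/2,17k+i\}$ or $\{17,(17(k\pm1)+i)/2,17k+i\}$ so that the midpoint lands on an index already known (by periodicity of $A$ or by the base coloring of $[0,17]$) to have color $c(i)$; then the no-rainbow condition gives $c(17k+i)=c(i)$.

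I expect the only genuinely fiddly step to be the bookkeeping in the $c(z)\ne\bG$ case: one must verify, for each residue $i\in\{1,\ldots,16\}$ that is not $0$, that either the local pattern near $17k+i$ contains a $\bB\bR\bG$ obstruction or there is a suitable $\AP(3)$ with differently-$\{\bR,\bB\}$-colored first two terms. This is a finite check over the $16$ residues and is entirely routine given the explicit pattern $\pm1,\pm2,\pm4,\pm8\mapsto\bR$ and $\pm3,\pm5,\pm6,\pm7\mapsto\bB$; no new idea beyond what Lemma \ref{repeated_pattern} already uses is required. Everything else — the base case from Table $1$, the multiplicative doubling relations, and the four parity cases — transfers verbatim with $15$ replaced by $17$.
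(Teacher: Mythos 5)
Your proposal is correct and is exactly what the paper intends: the authors give no separate proof of Lemma \ref{17}, remarking only that it is obtained by mimicking the proof of Lemma \ref{repeated_pattern} with $15$ replaced by $17$ and the indices adjusted, which is precisely the argument you outline (base pattern from the length-$17$ row of Table $1$, the doubling relations, the maximal-interval bootstrap, the finite check that $c(z)\neq\bG$, and the four parity cases).
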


\noindent
{\bf Remark:} The proof is almost identical to the proof of the previous lemma
and can be easily mimicked by replacing $15$ with $17$  and modifying
corresponding indices.

\begin{lemma} \label{I_1}
$|I_1|\leq  |I_2|+1$ and $r(I_2')\geq r(I_1)$.
\end{lemma}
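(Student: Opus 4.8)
The plan is to reflect the whole picture through the right endpoint $l$ of $I_1=[1,l]$ (so $c(l)=\bG$ and, by construction, $[1,l]$ has no $\bB\bB$) and to exploit the \emph{maximality} built into the definition of $I_1$. Observe first that $|I_1|\le |I_2|+1$ is equivalent to $I_2'=[l+1,2l-1]\subseteq I_2$, i.e.\ to the assertion that the first $\bB\bB$ occurring after $I_1$ uses positions no earlier than $(2l-2,2l-1)$; thus both parts of the lemma will drop out once the coloring of $[l+1,2l-1]$ is understood.

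The first step is to show that $[l+1,2l-2]$ contains no $\bB\bB$. Suppose toward a contradiction that $c(t)=c(t+1)=\bB$ with $l+1\le t\le 2l-3$. Then $\{2l-t,l,t\}$ and $\{2l-t-1,l,t+1\}$ are arithmetic progressions of length three whose middle term is $l$; since $c(l)=\bG$ and neither can be rainbow, $c(2l-t)$ and $c(2l-t-1)$ both lie in $\{\bG,\bB\}$. But $2l-t-1$ and $2l-t$ are consecutive integers lying in $[1,l]$, so Lemma~\ref{GRG}(a) forces $c(2l-t-1)=c(2l-t)=\bB$, a $\bB\bB$ inside $I_1$ — contradiction. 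Consequently the first $\bB\bB$ after $I_1$ occupies positions $(m-1,m)$ with $m\ge 2l-1$, which yields $|I_2|=m-l\ge l-1=|I_1|-1$ and, in particular, $I_2'\subseteq I_2$.

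The key step is to show that $[l+1,2l-1]$ contains no $\bG$. Suppose $c(q)=\bG$ for some $q\in[l+1,2l-1]$. Then $[1,q]$ is an initial segment of $[n]$ that ends in $\bG$ and strictly contains $I_1$; since $I_1$ is the \emph{longest} such $\bB\bB$-free segment, $[1,q]$ must contain a $\bB\bB$, and as $[1,l]$ has none this $\bB\bB$ lies in $[l+1,q]\subseteq[l+1,2l-1]$. By the previous paragraph it cannot lie in $[l+1,2l-2]$, so it must occupy positions $(2l-2,2l-1)$, forcing $c(2l-1)=\bB$; since $q\le 2l-1$ this is compatible only with $q=2l-1$, and then $c(q)=\bB\ne\bG$, a contradiction. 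Hence $I_2'$ has no $\bG$.

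Finally, for every $p\in[1,l-1]$ with $c(p)=\bR$, the triple $\{p,l,2l-p\}$ is a genuine $\AP(3)$ (its largest term satisfies $2l-p\le 2l-1\le m\le n$) with middle term colored $\bG$, so non-rainbowness forces $c(2l-p)\in\{\bR,\bG\}$, and by the previous step $c(2l-p)=\bR$. Since $p\mapsto 2l-p$ is an injection of $[1,l-1]$ into $I_2'$ and $c(l)=\bG$, we conclude $r(I_1)=r\bigl([1,l-1]\bigr)\le r(I_2')$, which completes the proof. I expect the third step — that $I_2'$ has no green — to be the main obstacle: it is the only place where one needs $I_1$ to be the \emph{longest} $\bB\bB$-free initial segment ending in $\bG$ rather than merely one such segment, and it is exactly what upgrades the reflection in the last step from ``$\bR$ or $\bG$'' to ``$\bR$''; the rest is routine reflection through $l$ combined with Lemma~\ref{GRG}(a).
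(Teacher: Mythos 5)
Your proof is correct and takes essentially the same route as the paper's: reflect through $l$ to push any premature $\bB\bB$ back into $I_1$ (contradicting Lemma~\ref{GRG}(a) and the $\bB\bB$-freeness of $I_1$), then use the injection $x\mapsto 2l-x$ to show $r(I_2')\geq r(I_1)$. Your explicit verification that $I_2'$ contains no $\bG$ is a detail the paper leaves implicit (it invokes $c(2l-x)\neq\bG$ via the maximality of $I_1$, exactly as you argue), so the two proofs coincide in substance.
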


\begin{proof}
Assume first that $|I_1|\geq |I_2|+2$. Let $I_1 = [1, l]$ and
$I_2= [l+1, b+1]$. Recall that $c(l)=\bG$ and $c(b)=c(b+1)=\bB$.
The following $\AP(3)$s: $\{2l-b,  l,b\}$ and $\{2l-b-1, l, b+1\}$
and lemma \ref{GRG}(a) imply that $c(2l-b)=c(2l-b-1)=\bB$, a
contradiction to the fact that $I_1$ does not contain $\bB\bB$. To
prove the second statement, consider $\{x, l, 2l-x\}$, where $x\in
I_1$ and $c(x)=\bR$. Since  $2l-x \in I_2'$ and  $c(2l-x)\neq
\bG$, we have $c(2l-x)=\bR$. Therefore,
 for each $x\in I_1$ such that $c(x)=\bR$ there is
a unique $y\in I_2'$ such that $c(y) =\bR$.
\end{proof}

\begin{lemma} \label{I_3}
If $g(I_1)\geq 3$ then $g(I_2\cup I_3)\leq 2$ and  $r(I_3)\geq (|I_3|-3)/4$.
\end{lemma}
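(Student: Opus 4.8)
The plan is to prove the two assertions in turn, the first feeding into the second. I would lean on two facts throughout. \emph{(i) $g(I_2)=0$:} write $I_1=[1,l]$ and let $[b,b+1]$ be the first $\bB\bB$ occurring after position $l$, so that $I_2=[l+1,b+1]$; a $\bG$ at a position $p\in I_2$ would satisfy $p\le b-1$, and then $[1,p]$ would be an initial segment ending in $\bG$, containing no $\bB\bB$ (none in $I_1$, none in $[l+1,b-1]$, and none straddling $l$ since $c(l)=\bG$), and strictly longer than $I_1$ — contradicting the maximality in the definition of $I_1$. Hence every $\bG$ outside $I_1$ lies in $I_3$, and by the same maximality every $\bG$ at a position $p>l$ forces $[1,p]$ to contain a $\bB\bB$. \emph{(ii) The doubling move:} if $c(p)=\bG$, $[y,y+1]$ is a $\bB\bB$ with $p\notin\{y,y+1\}$, and both $2p-y-1\ge 1$ and $2p-y\le n$, then the $\AP(3)$'s $\{y,p,2p-y\}$ and $\{y+1,p,2p-y-1\}$ show that neither $c(2p-y)$ nor $c(2p-y-1)$ equals $\bR$, hence both lie in $\{\bG,\bB\}$, hence both equal $\bB$ by Lemma~\ref{GRG}(a); so a $\bG$ lying on one side of a $\bB\bB$ produces a mirror-image $\bB\bB$ on the other side, provided the mirror image stays inside $[1,n]$.

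\emph{Plan for $g(I_2\cup I_3)\leq 2$.} By (i) it suffices to show $g(I_3)\leq 2$. I would argue by contradiction: suppose $g(I_3)\ge 3$ and pick $\bG$'s $p_1<p_2<p_3$ in $I_3$ with $p_1$ smallest. Then $[l,p_1-1]$ is a $\bG$-$\bG$ interval containing the $\bB\bB$ at $[b,b+1]$ (recall $l<b$ and $b+1\le p_1-1$). Doubling the $\bB\bB$ at $[b,b+1]$ across the $\bG$'s $p_2$ and $p_3$ plants $\bB\bB$'s to the right of $p_2$ and of $p_3$ (whenever the mirror image lands in $[n]$), so Lemma~\ref{BB} forces $\bB\bB$ into $[p_1,p_2]$ and into $[p_2,p_3]$: each of the $\bG$-$\bG$ intervals $[l,p_1-1]$, $[p_1,p_2-1]$, $[p_2,p_3-1]$ contains a $\bB\bB$. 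By Lemma~\ref{up-to-15pattern}, any such interval of length at most $21$ has length $15$, $17$ or $21$, with the corresponding row of the table. Lengths $15$ and $17$ are impossible: by Lemmas~\ref{repeated_pattern} and~\ref{17} the whole coloring would then be the period-$15$ pattern or $c_0$ (off the $\bG$-positions), in each of which every $\bG$-$\bG$ interval contains a $\bB\bB$; but $g(I_1)\ge 3$ forces $I_1$ to contain at least two entire $\bG$-$\bG$ intervals, hence a $\bB\bB$, contradicting the definition of $I_1$. What remains — ruling out length $21$ and lengths $\ge 22$ — I would do by a hands-on $\AP(3)$ analysis: compare the internal patterns forced on the consecutive intervals $[l,p_1-1]$, $[p_1,p_2-1]$, $[p_2,p_3-1]$, relate them by doubling to the $\bB\bB$-free $\bG$-$\bG$ intervals guaranteed inside $I_1$, and derive a contradiction; the overshoot situations, where a doubling step leaves $[n]$, are handled separately using that $n$ is large.

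\emph{Plan for $r(I_3)\geq (|I_3|-3)/4$.} With $g(I_3)\leq 2$ in hand, I would write $I_3$ as the concatenation of $[b+2,p_1-1]$ (the part of the $\bG$-$\bG$ interval $[l,p_1-1]$ inside $I_3$), the $\bG$-$\bG$ interval $[p_1,p_2-1]$, and the $\bG$-free stretch $[p_2,n]$, with the obvious shortening if $g(I_3)\le 1$. Every $\bG$-$\bG$ interval meeting $I_3$ contains a $\bB\bB$ — the one starting at $l$ by construction, the others by Lemma~\ref{BB} as above — and for such an interval the relations $c(2i)=c(i)$ coming from its left endpoint force at least a quarter of its positions to be $\bR$; this is immediate from the table for lengths $\leq 21$ and follows by the same bookkeeping for longer ones, and in particular the tail $[b+2,p_1-1]$ is (as one checks row-by-row in the table) at least half $\bR$. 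For the stretch $[p_2,n]$: if it contains no $\bB\bB$ its $\bB$'s are isolated and it is at least half $\bR$; if it contains a $\bB\bB$, doubling that $\bB\bB$ across the $\bG$ at $p_2$ produces a $\bB\bB$ left of $p_2$, which contradicts a definition if it falls in $I_1$ or in $I_2$ before $[b,b+1]$, and otherwise constrains $[p_2,n]$ enough to give an $\bR$-fraction $\ge 1/4$. Summing the pieces would yield $r(I_3)\ge (|I_3|-g(I_3))/4-O(1)$, and one checks the additive loss is at most $3$ once the $\leq 2$ $\bG$'s and the rounding are accounted for.

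\emph{Main obstacle.} The reductions above — $g(I_2)=0$, the doubling move, and the elimination of the length-$15$ and length-$17$ cases — are routine. The real difficulty is twofold: ruling out that all of the $\bG$-$\bG$ intervals between $l$ and $p_3$ have length $21$ or more, which requires comparing the internal patterns these intervals are forced into and playing them off against the $\bB\bB$-free intervals inside $I_1$; and, for the second assertion, establishing the lower bound on the $\bR$-density on $\bG$-$\bG$ intervals (and $\bG$-free stretches) whose length exceeds the range $\leq 21$ covered by the table. Tracking the boundary cases where a doubling step leaves $[1,n]$ is the other place where care — and the hypothesis that $n$ is large — is genuinely needed.
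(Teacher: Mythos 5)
Your proposal takes a different route from the paper and leaves the decisive steps unproved. For $g(I_3)\le 2$ you reduce to classifying the lengths of the $\bG$-$\bG$ intervals between consecutive $\bG$'s of $I_3$, dispose of lengths $15$ and $17$ via Lemmas~\ref{repeated_pattern} and~\ref{17}, and then defer ``length $21$ and lengths $\ge 22$'' to an unspecified hands-on $\AP(3)$ analysis. That deferred case is the whole difficulty: Lemma~\ref{up-to-15pattern} gives no information beyond length $21$, and the length-$21$ row of Table~1 with $x=y=\bB$ is locally consistent and does contain a $\bB\bB$, so nothing in your outline prevents three $\bG$'s in $I_3$ separated by long intervals. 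The paper avoids length classification entirely. Its key idea is a parity/reflection argument: since $g(I_1)\ge 3$, there are two $\bG$'s $x<x'$ in $I_1$ of the same parity; if the first $\bB\bB$, at $\{b,b+1\}$, satisfied $2b+2-x'\le n$, then the $\AP(3)$s $\{x',b,2b-x'\}$ and $\{x',b+1,2b+2-x'\}$ force $c(2b-x')=c(2b+2-x')=\bB$, and the $\AP(3)$s through $x$ then plant a $\bB\bB$ at $\{b-(x'-x)/2,\,b+1-(x'-x)/2\}$, contradicting minimality of $b$. Hence $n<2b+2-x'$: the first $\bB\bB$ sits at the far right of $[n]$. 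Two same-parity $\bG$'s $y<y'$ in $I_3$ and the last $\bB\bB$ $\{z,z+1\}$ before $y$ (which is $\ge b$, hence far enough right that $2z-y\ge 3$) then yield, by the same double reflection, a $\bB\bB$ strictly between $z$ and $y$, contradicting maximality of $z$. This same-parity reflection trick is absent from your proposal.

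The second assertion has the same problem. The paper derives $r(I_3)\ge(|I_3|-3)/4$ by showing $I_3$ contains no $\bB\bB\bB\bB$ (reflect the same-parity pair among the four $\bB$'s through the midpoints with $x'$ and $x$, producing a $\bB\bB$ before $b$ thanks to the bound $n<2b+2-x'$), after which every window of four consecutive integers in $I_3$ contains an $\bR$. Your density argument instead asserts that every long $\bG$-$\bG$ interval containing $\bB\bB$ is at least one-quarter $\bR$ ``by the same bookkeeping.'' For long intervals this does not follow from the local doubling relations $c(2i)=c(i)$ and $c(2i-k)=c(i)$: those only force the orbit of $1$ under doubling mod $k$ (and its negative) to be $\bR$, and that can occupy far less than a quarter of the positions (for $k=63$ only $12$ of the $62$ interior positions are forced). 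So both halves of your plan are missing their central argument; the ingredient you need is the reflection step that pins the first $\bB\bB$ near the right end of $[n]$.
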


\begin{proof}

Assume that $I_2\cup I_3$ contains at least three integers colored $\bG$.
Since $g(I_2)=0$ by definition of $I_2$, we have $g(I_3)\geq 3$.
We know that $I_1$ contains at least three $\bG$s as well.
Then, there are $x,x'\in I_1$ and $y, y'\in I_3$, such that
$c(x)=c(x')=c(y)=c(y')=\bG$, $x$ and $x'$ are of the same parity and
$y$ and $y'$ are of the same parity.
Let $x<x'$ and $y<y'$.
Let $b$ be the smallest integer such that $c(b)=c(b+1)=\bB$.
Note that $x'<b<y$.

{\bf Claim:} $n<2b+2-x'$. Assume not, then $2b+2-x' \in [1,n]$,
thus considering $\AP(3)$s $\{x', b, 2b-x'\}$ and $\{x', b+1,
2b+2-x'\}$ we see that $c(2b-x')= c(2b+2-x')=\bB$. Now, the
$\AP(3)$s $\{x,  b- (x'-x)/2  , 2b-x'\}$ and $\{x, b+1- (x'-x)/2,
2b+2-x'\}$ show that $c(b-(x'-x)/2)= c(b+1-(x'-x)/2)=\bB$. This
contradiction to minimality of $b$ proves the claim.

Let $z$ be the largest number such that $z<y$ and
$c(z)=c(z+1)=\bB$. Observe that $2z-y  \geq   2b -y  \geq  n-2 +x'
-y +1  = n - (y-x') -1$. Since $x'\geq 4$ and $y\leq n$, we have
that $2z-y \geq n-n+4-1\geq 3$. Therefore we can consider the
following $\AP(3)s$: $\{2z-y, z, y\}, \{2z-y+2, z+1, y\}$, which
imply  that $c(2z-y)=c(2z-y+2)=\bB$. Then $\{2z-y, z+
(y'-y)/2,y'\}$, $\{2z-y+2, z+1 +(y'-y)/2, y'\}$ give us that $c(z
+ (y'-y)/2)= c(z+1 + (y'-y)/2) =\bB$, contradicting maximality of
$z$.

This proves that there are at most two integers colored $\bG$ in $I_3$. In order to prove the second
statement of the lemma we show that $I_3$ does not contain $\bB\bB\bB\bB$.

Assume that there is $y\in I_3$ such that $y+3\in I_3$ and
$y,y+1,y+2,y+3$ is colored $\bB\bB\bB\bB$.
Assume that $y$ and $y+2$ have  the same parity as $x'$ (otherwise take
$y+1$ and $y+3$).
Then $\{x', (y+x')/2, y\}$ and $\{x, (y+x')/2+1, y+2\}$ imply that
$c((y+x')/2)=c((y+x')/2+1)=\bB$.
Using the claim, we have that  $y\leq n-3 < 2b+2-x' -3$. Thus $ (y+x')/2 < (2b
-1)/2 < b$, a  contradiction to the minimality of $b$.
\end{proof}

{\bf Acknowledgments} We would like to thank Zsolt Tuza for
bringing this problem to our attention.  We also thank an
anonymous referee for helpful comments.

\end{document}